\pgfplotsset{compat=1.13}
\numberwithin{theorem}{section}
\DeclareMathOperator*{\argmin}{arg\,min}
\newcommand{\vect}[1]{\boldsymbol{#1}}
\newcommand{\vecbracks}[1]{\left[#1\right]}
\newcommand{\rvX}{X_t}
\newcommand{\rvXDiscrete}[1]{X_{#1}}
\newcommand{\Xv}{\vect{X}}
\newcommand{\Yv}{\vect{Y}}
\newcommand{\dd}{\text{d}}
\newcommand{\De}{\Delta}
\newcommand{\MeaSp}{\mathcal{M}}
\newcommand{\MeaPr}{\mu}
\newcommand{\SigAlg}{\mathcal{B}}
\newcommand{\MeaPrDat}{\MeaPr_\LimDat}
\newcommand{\Koopd}{\mathcal{K}}
\newcommand{\ProjKoopd}[1]{\Koopd_{#1}}
\newcommand{\ProjKoopdDat}{\ProjKoopd{\MeaPrDat}}
\newcommand{\DiffGen}{\mathcal{L}}
\newcommand{\LimDat}{T}
\newcommand{\LimBasis}{N}
\newcommand{\LimEig}{J}
\newcommand{\Drift}{a}
\newcommand{\Vol}{b}
\newcommand{\FlowX}[1]{\rvX^{#1}}
\newcommand{\FlowXDiscrete}[2]{\rvXDiscrete{#1}^{#2}}
\newcommand{\Transpose}[1]{#1^\intercal}
\newcommand{\Koopdmat}[1]{A_{#1}}
\newcommand{\KoopdmatDat}{\Koopdmat{\MeaPrDat}}
\newcommand{\Transmat}[1]{K_{#1}}
\newcommand{\TransmatDat}{\Transmat{{\MeaPrDat}}}
\newcommand{\EDMDTransmat}{\hat{K}_{\LimDat}}
\newcommand{\Massmat}[1]{M_{#1}}
\newcommand{\MassmatDat}{\Massmat{{\MeaPrDat}}}
\newcommand{\Lmat}[1]{L_{#1}}
\newcommand{\EDMDmat}{\hat{A}_{\LimDat}}
\newcommand{\EDMDop}{\hat{\Koopd}_{\LimDat}}
\newcommand{\gf}{g}
\newcommand{\vgf}{\vect{\gf}}
\newcommand{\FunSpGInf}{\mathcal{G}}
\newcommand{\FunSpGFin}{G_{\BBasis}}
\newcommand{\TargetField}{\mathbb{R}}
\newcommand{\Proj}{P}
\newcommand{\norm}[1]{\left\lVert#1\right\rVert}
\newcommand{\Basis}{\psi}
\newcommand{\BBasis}{\vect{\psi}}
\newcommand{\Param}{\vParam}
\newcommand{\vParam}{\theta}
\newcommand{\ParamSet}{\Theta}
\def\input@path{{./img/},{./}} \makeatother
\newcommand{\TheTitle}{Operator Fitting for Parameter Estimation of
  Stochastic Differential Equations}
\newcommand{\ShortTitle}{Operator
  Fitting for Parameter Estimation of SDEs}
\newcommand{\TheAuthors}{Asbj{\O}rn N. Riseth, and Jake
  P. Taylor-King}
\headers{\ShortTitle}{\TheAuthors}
\title{{\TheTitle}
  %\thanks{Submitted to the editors on.
  %  \funding{This work was funded by EPSRC grants:
  %  EP/L015803/1 and EP/G037280/1.}}
}
\author{
  Asbj{\O}rn N. Riseth\thanks{Mathematical Institute,
    University of Oxford, Oxford, OX2 6GG, UK} \and
  Jake P. Taylor-King\footnotemark[1]
  \thanks{Department of Integrated
    Mathematical Oncology, H. Lee Moffitt Cancer Center and Research
    Institute, Tampa, FL, USA
    \newline{\bf Contact email:}
    taylor-king@imsb.biol.ethz.ch}\newline }
\begin{document}

\maketitle

% REQUIRED
\begin{abstract}
  Estimation of parameters is a crucial part of model
  development. When models are deterministic, one can minimise the
  fitting error; for stochastic systems one must be more
  careful. Broadly, parameterisation methods for stochastic dynamical
  systems fit into maximum likelihood estimation- and method of
  moment-inspired techniques. We propose a method where one matches a
  finite dimensional approximation of the Koopman operator with the
  implied Koopman operator as generated by an extended dynamic mode
  decomposition approximation. One advantage of this approach is that
  the objective evaluation cost can be independent the number of
  samples for many dynamical systems. We test our approach on two
  simple systems in the form of stochastic differential equations,
  compare to benchmark techniques, and consider limited
  eigen-expansions of the operators being approximated. Other small
  variations on the technique are also considered, and we discuss the
  advantages to our formulation.
\end{abstract}

% REQUIRED
\begin{keywords}
  Extended Dynamic Mode Decomposition, EDMD, Stochastic Differential
  Equations, SDEs, Parameter Estimation, Parameter Inference, Koopman
  Operator, Transition Operator, Infinitesimal Generator.
\end{keywords}

% REQUIRED
%\begin{AMS}
%  47N30, % Operator theory / Applications in probability theory and statistics
%  60H10, % Stochastic analysis / Stochastic ordinary differential equations
%  62F99, % Statisics / None of the above
%  65D99. % Numerical approx and geometry / None of the above
%\end{AMS}

%%%%%%%%%%%%%%%%%%%%%%%%%%%%%%%%%%%%%
\section{Introduction}
%%%%%%%%%%%%%%%%%%%%%%%%%%%%%%%%%%%%%

% Stochastic Systems Extension of Deterministic Systems:} }}

% Stochastic Systems Extension of Deterministic Systems:} }}
In multiple application areas, such as physics and biology, noise
plays an important role in the system
dynamics~\cite{Gardiner_1994,VanKampen_1992}. One way to include noise
into the dynamics is to add suitable stochastic terms to ordinary
differential equations (ODEs), which leads to so-called stochastic
differential equations (SDEs)~\cite{Pavliotis_2016}.  Possible
dynamics of SDEs compared to ODEs are then greatly enriched due to the
presence of noise, making the SDE suitable to capture intriguing
noise-induced phenomena, such as noise-induced switching,
oscillations, and
focusing~\cite{Gardiner_1994,VanKampen_1992,Erban_2007}.

% Parameter estimation hard} }

With a family of parameterised deterministic dynamical systems, one
typically chooses parameters such that a suitable objective function
is minimised. In the case of continuous state-space dynamical systems,
usually one minimises the mean squared error between model prediction
and observed data; this can be achieved via nonlinear least
squares~\cite{Peifer_2007, Ramsay_2007, Bock_2007}.  However, methods
useful in the deterministic setting are unsuitable when applied to
dynamical systems with intrinsic noise --- especially when
noise-induced phenomena are present.

% Forward / backward parameter fitting.

Many well behaved dynamical systems have an associated forward and
backward interpretation~\cite{klus2016On, klus2017data}. Depending on
the interpretation used, this can lead to different numerical methods
for parameter estimation~\cite{Hartig_2011, hurn2007seeing}. The
forward interpretation describes the time-evolution of the probability
that the system is in some state, and is known as the
Perron--Frobenius operator (PFO). The PFO naturally links to maximum
likelihood estimation (MLE), where one selects parameters for a model
such that the probability of observed data being realised (by said
model) is maximised. The backward interpretation is adjoint to the
forward interpretation and describes the time-evolution of
expectations, known as the Koopman\footnote{ The Koopman operator for
  random dynamical systems is also called the stochastic Koopman
  operator, or, in probability terminology, the transition operator.}
operator (KO). In the method of moments (MM), parameters are chosen to
match theoretical expected values of a model to sample mean values as
calculated from an observed data set. Naturally, solving the PFO or KO
via numerical scheme and implementing a minimisation procedure to find
the optimal parameter choice can be computationally intensive, so
various methods have been proposed, for example, Monte Carlo
simulation, or approximate Bayesian computation \cite{Golightly_2011,
  Hines_2015, Picchini_2014}.

% What is EDMD?}}
Recently, using data to numerically reconstruct the Koopman operator
and the Perron--Frobenius operator (and their eigendecompositions)
have become a popular area of study~\cite{klus2016On,klus2017data}.
One of these methods is known as Extended Dynamic Mode Decomposition
(EDMD)~\cite{williams2015data, Kutz_2016}, which uses basis functions
to project the data into a higher dimensional space, where it is
assumed dynamics are linear. Therefore one can propagate forward
nonlinear models in a linear fashion. Depending on the basis functions
used, EDMD methods can scale very well with dimension and efficient
numerical schemes have been proposed. For example, Williams
\emph{et. al.} \cite{rowley2009spectral} used an Arnoldi type
algorithm for analysing data from direct numerical simulations of a
turbulent flow on a $256\times201\times144$ grid, and Tu
\emph{et. al.} \cite{tu2014dynamic} presented a SVD based algorithm
that was applied to analysis of an incompressible Navier-Stokes
generated flow on a $1500\times 500$ grid.

% Current approaches:}}

To avoid the limitless possible stochastic dynamical systems we could
consider, we restrict ourselves to SDEs. In the case of SDEs, the PFO
defines the Fokker--Planck equation and the KO defines the Kolmogorov
backward equation. In the review by Hurn
\emph{et. al.}~\cite{hurn2007seeing}, two classification terms were
identified as alternatives to exact MLE\footnote{Exact MLE was the
  term used to describe the case where the transition probability had
  a known analytic expression.}. These were: likelihood based
procedures, essentially trying to estimate the likelihood function
using a numerical scheme; and the obscurely named ``sample DNA
matching'' procedures, where one tries to match some feature(s) of the
model to some feature(s) of the data --- essentially accounting for
all other parameter estimation methods.

% What does our approach do?:}}

Our method involves the following steps: we calculate the EDMD matrix
as implied by the data; we use the same basis functions as the EDMD
matrix to build a matrix representation of the Koopman operator; and
we then choose parameters such that these matrices are as close as
possible (under some norm).
We prove that, under relevant conditions, EDMD estimates
the correct Koopman operator with a mean-square error that scales with
the inverse of the number of data points.
A similar approach of matching matrix representations of operators
within the Koopman framework have also been proposed
in~\cite{mauroy2016linear,mauroy2017koopman}, where they focus on
ODEs with polynomial nonlinearities.
Numerical tests indicate convergence of
the root mean squared error as the amount of data increases.
Fundamental to EDMD is that one can decompose the (approximate)
Koopman operator into eigenfunctions. For one of our numerical
examples we use this idea to show that for large quantities of data, a
limited eigen-expansion can provide better parameter estimates when
compared to the full matrix representation. Our method is neither an
MLE based method nor an MM based method, but can be placed in the
category of sample DNA matching methods as described by Hurn
\emph{et. al.}~\cite{hurn2007seeing}.

% Hows does it supersede previous approaches?}}

Using our EDMD-based approach, we can carry out computationally cheap
parameterisations of SDEs (depending on the choice in basis
functions). Our method is comparable to other standard techniques for
SDEs, and we also mention variations on our method. The method is
general in that it should be clear how to adapt the approach to other
dynamical systems.

% Layout of paper}}

The paper
is organised as follows. Our algorithm and its theoretical motivation
are in \Cref{sec:sdeintro}, and numerical experiments are in
\Cref{sec:num_examples}. We consider variations of our algorithm and
similarities to existing methods in \Cref{sec:variations_algo}, and
discuss further work in \Cref{sec:conclusion}.

%%%%%%%%%%%%%%%%%%%%%%%%%%%%%%%%%%%%%
\section{Markov operators of SDEs}\label{sec:sdeintro}
%%%%%%%%%%%%%%%%%%%%%%%%%%%%%%%%%%%%%
We are concerned with parameter estimation for one-dimensional
autonomous SDEs of the form\footnote{In
  this paper, we only consider one-dimensional SDEs, although higher
  dimensional SDEs follow naturally.}
\begin{equation}\label{eq:general_sde}
  \dd \rvX= \Drift( \rvX; \vParam)\, \dd t + \Vol( \rvX; \vParam )\, \dd W_t \, .
\end{equation}
Here, $W_t$ is a standard Brownian motion, and $\Drift(x;\vParam)$ and
$\Vol(x;\vParam)$ are drift and volatility functions parameterised by
$\vParam$ from some parameter set $\ParamSet$.  To the
SDE~\eqref{eq:general_sde} there is an associated transition operator
semigroup that describe the expected evolution of functions of the
state. We follow the DMD-literature and refer to these operators as
Koopman operators, due
to~\cite{koopman1931hamiltonian,lasota1985probabilistic}.  For our
purposes, assume that $\Drift(x;\vParam)$ and $\Vol(x;\vParam)$ are
well behaved, so that a strong solution to the SDE exists for relevant
initial conditions.  Given a data set, believed to be generated by a
process of the form~\eqref{eq:general_sde}, we want to obtain a
parameter estimate $\hat{\vParam}$. When using synthetic data
generated by the SDE with parameter $\vParam^*$, we wish for our
estimate to match the true value.

Our approach builds on Koopman theory and EDMD to find
$\hat{\vParam}$. The core idea of the proposed algorithm is to compare
an EDMD operator, approximated from data, with the Koopman operator
associated with~\eqref{eq:general_sde}. We can also place the
Perron--Frobenius operator in the context of our algorithm; however,
as EDMD is best understood as an approximation to the Koopman operator
we focus on the backward interpretation (with occasional mentions to
the forward interpretation). We start by introducing the relevant
theory, before formalising our algorithm. The contents of
\Cref{subsec:edmd_description,subsec:edmd_koop_conn} follows
the exposition of Korda and Mezi{\'c}~\cite{korda2018convergence}.

%%%%%%%%%%%%%%%%%%%%%%%%%%%%%%%%%%%%%
\subsection{The Koopman Operator}\label{subsec:koop_description}
%%%%%%%%%%%%%%%%%%%%%%%%%%%%%%%%%%%%%

Denote by $\MeaSp\subset \mathbb{R}$ the state space of the
SDE~\eqref{eq:general_sde}.  Define $\FlowX{x}$ to be the solution to
the SDE at time $t$, with initial condition $X_0=x$.
\begin{definition}
  The time $t\geq 0$ \emph{Koopman operator} on functions
  $g:\MeaSp\to\TargetField$, is given by
  \begin{equation}\label{eq:def_koop_t}
    \Koopd^tg(x) = \mathbb{E}_W\left[ g(\FlowX{x}) \right],
  \end{equation}
  where the expectation is taken with respect to the paths of the
  Brownian motion $W_t$.
\end{definition}
We require that the domain of $\Koopd^t$ is such that $g(\FlowX{x})$
is integrable with respect to the paths of the Brownian motion. Note
that $\Koopd^t$ is linear on the space of functions.  In the case of
an ODE, that is, when
$b(x;\theta)\equiv 0$, the solution $\FlowX{x}$ is deterministic and
the Koopman operator is just $\Koopd^tg(x)=g(\FlowX{x})$.

For completeness we note that the Perron--Frobenius operator can be
viewed as the adjoint of the Koopman operator.  Associate a
probability space $(\MeaSp,\SigAlg,\MeaPr)$ to the state space of the
SDE~\eqref{eq:general_sde}.  The operators are then adjoint under the
duality pairing between $L^1(\MeaSp,\MeaPr)$ and
$L^\infty(\MeaSp,\MeaPr)$, the spaces of integrable, and essentially
bounded functions respectively. See, for example Klus
\emph{et. al.}~\cite{klus2016On}, for a broader discussion of the
duality between the two operators and their numerical approximations.

%%%%%%%%%%%%%%%%%%%%%%%%%%%%%%%%%%%%%
\subsection{Extended Dynamic Mode
  Decomposition}\label{subsec:edmd_description}
%%%%%%%%%%%%%%%%%%%%%%%%%%%%%%%%%%%%%

Suppose we are given snapshots of input-output data
\begin{equation}
  \Xv = \vecbracks{x_1,\dots,x_\LimDat},\quad \Yv=\vecbracks{y_1,\dots,y_\LimDat},
\end{equation}
where $y_j$ is a realisation of $\FlowXDiscrete{t}{x_j}$ for some
fixed time-step $t > 0$.  If the data is a sample path of the solution
to the SDE, we have that $x_{j+1}=y_j$ for $j=1,\dots,\LimDat-1$, that
is, $x_{j+1} = \FlowXDiscrete{jt}{x_1}$.  In the remainder of the
manuscript we either assume that the data points $x_j$ are drawn
independently from a given distribution $\MeaPr$, or from the Markov
chain $x_{j+1} = \FlowXDiscrete{t}{x_j}$, started at a known point
$x_1\in\MeaSp$. Some convergence properties are guaranteed if the
Markov chain is Ergodic~\cite{korda2018convergence}.

The EDMD procedure for characterising the dynamical system that
generated the data starts with a choice of linearly independent
functions $\Basis_j:\MeaSp\to\TargetField$, $j=1,\dots,\LimBasis$.  By
first defining the vector field
$\BBasis : \MeaSp \to \TargetField^\LimBasis$ as
\begin{equation}
  \BBasis(x) =  \vecbracks{\Basis_1(x),\dots,\Basis_\LimBasis(x)}^\intercal  ,
\end{equation}
we specify the matrices
$\BBasis(\Xv), \BBasis(\Yv) \in \TargetField^{\LimBasis \times
  \LimDat}$ as
\begin{equation}
  \BBasis(\Xv)=\vecbracks{\BBasis(x_1),\dots,\BBasis(x_\LimDat)}  ,
  \quad
  \BBasis(\Yv)=\vecbracks{\BBasis(y_1),\dots,\BBasis(y_\LimDat)}  .
\end{equation}
Given the basis functions and the data, we solve the linear least
squares problem
\begin{equation}\label{eq:edmd_mat_min}
  \min_{A\in\mathbb{R}^{\LimBasis\times\LimBasis}}\norm{A\BBasis(\Xv)-\BBasis(\Yv)}_F^2
  =\min_{A\in\mathbb{R}^{\LimBasis\times\LimBasis}}\sum_{j=1}^\LimDat\norm{A\BBasis(x_j)-\BBasis(y_j)}_2^2.
\end{equation}
The map
$\norm{A}_F^2:=\sum_{i=1}^N \sum_{j=1}^N \vert A_{i,j} \vert^2$ is
known as the Frobenius norm.  Let $A^\dagger$ denote the Moore-Penrose
pseudo-inverse of a matrix $A$~\cite{penrose1955generalized}.  A
solution to the minimisation is given by
\begin{align}\label{eq:def_edmdmat}
  \EDMDmat &= \BBasis(\Yv)\BBasis(\Xv)^\dagger \nonumber \\
           &=  \BBasis(\Yv){\BBasis(\Xv)}^\intercal {\left(
             {\BBasis(\Xv)\BBasis(\Xv)}^\intercal \right)}^{-1}  ,
\end{align}
where the second equality holds so long as the rows of $\BBasis(\Xv)$
are linearly independent.  Using the EDMD matrix, one can construct a
linear operator that approximately characterises the evolution of
functions under the dynamical system that generated the data.
\begin{definition}
  Let $\FunSpGFin$ be the linear span of the functions
  $\Basis_1,\dots,\Basis_\LimBasis$.  Define the time $t\geq 0$ EDMD
  operator $\EDMDop^t:\FunSpGFin\to\FunSpGFin$ on functions
  $g(x)=c_g^\intercal\BBasis(x)$, with $c_g\in\mathbb{R}^\LimBasis$,
  by
  \begin{equation}
    \EDMDop^t g(x) = c_g^\intercal\EDMDmat \BBasis(x).
  \end{equation}
\end{definition}

%%%%%%%%%%%%%%%%%%%%%%%%%%%%%%%%%%%%%
\subsection{Connection between Koopman and EDMD
  operators}\label{subsec:edmd_koop_conn}
%%%%%%%%%%%%%%%%%%%%%%%%%%%%%%%%%%%%%

The EDMD operator approximates the projection of the Koopman operator
onto $\FunSpGFin$, %sometimes referred to as a Galerkin approximation,
with respect to a data-driven inner product.  Assume that there exists
a subspace of $L^2(\MeaSp,\MeaPr)$ that is invariant under $\Koopd^t$
for all $t\geq 0$, and denote the largest such space by
$\FunSpGInf$. We will henceforth consider the Koopman operators
$\Koopd^t:\FunSpGInf\to\FunSpGInf$.  Assume that the basis functions
$\Basis_j$ belong to $\FunSpGInf$ for $j=1,\dots,\LimBasis$, so that
$\FunSpGFin\leq \FunSpGInf$.  The $L^2(\MeaPr)$ projection onto
$\FunSpGFin$ is defined as
\begin{equation}
  \Proj_{\BBasis}^\MeaPr  g
  % = \argmin_{f\in \FunSpGFin}
  % \norm{f-g}_{L^2(\MeaPr)}
  =\argmin_{f\in\FunSpGFin}
  \int_\MeaSp{(f-g)}^2\,\dd \MeaPr
  = \BBasis^\intercal\argmin_{c\in\TargetField^{\LimBasis}}
  \int_\MeaSp{(c^\intercal\BBasis-g)}^2\,\dd \MeaPr.
\end{equation}
As $\FunSpGFin$ is finite-dimensional, the projected Koopman operator
$\ProjKoopd{\MeaPr}^t:\FunSpGFin\to\FunSpGFin$ defined by
$\ProjKoopd{\MeaPr}^t=\Proj_{\BBasis}^\MeaPr \Koopd^t$ has an
associated matrix representation.  This matrix is can be written as
$\Koopdmat{\MeaPr}=\Transmat{\MeaPr} \Massmat{\MeaPr}^{-1}$, where
\begin{equation}\label{eqn:mass_trans_mat_def}
  \Massmat{\MeaPr} = \int_\MeaSp\BBasis\BBasis^\intercal\,\dd\MeaPr,
  \qquad
  \Transmat{\MeaPr} = \int_\MeaSp \left(\Koopd^t\BBasis\right)\BBasis^\intercal\,\dd\MeaPr.
\end{equation}
The matrix $\Massmat{\MeaPr}$ is often referred to as the mass matrix,
or the Gramian matrix.  The next result and subsequent proof is
adapted from~\cite{korda2018convergence}.
\begin{theorem}\label{thm:proj_koop_rep}
  If the matrix $\Massmat{\MeaPr}$ is invertible, then for
  $g=\Transpose{c_g}\BBasis\in\FunSpGFin$ we have
  \begin{equation}
    \ProjKoopd{\MeaPr}^t g = \Transpose{c_g} \Koopdmat{\MeaPr} \BBasis.
  \end{equation}
\end{theorem}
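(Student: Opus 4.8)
The plan is to compute the action of $\ProjKoopd{\MeaPr}^t = \Proj_{\BBasis}^\MeaPr\Koopd^t$ on an arbitrary element $g = \Transpose{c_g}\BBasis \in \FunSpGFin$ explicitly, and to verify that the result coincides with $\Transpose{c_g}\Koopdmat{\MeaPr}\BBasis$. First I would use the linearity of $\Koopd^t$ noted after the definition of the Koopman operator to write $\Koopd^t g = \Transpose{c_g}(\Koopd^t\BBasis)$, where $\Koopd^t\BBasis$ denotes the vector-valued function with components $\Koopd^t\Basis_j$. This function lies in $\FunSpGInf$ but generally not in $\FunSpGFin$, so it then remains to project it back onto $\FunSpGFin$.

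Next I would make the projection explicit through its normal equations. For any function $h$ in the domain, writing $\Proj_{\BBasis}^\MeaPr h = \Transpose{c}\BBasis$, the coefficient vector $c \in \TargetField^{\LimBasis}$ minimises $\int_\MeaSp (\Transpose{c}\BBasis - h)^2\,\dd\MeaPr$. Setting the gradient in $c$ to zero yields $\int_\MeaSp \BBasis(\Transpose{c}\BBasis - h)\,\dd\MeaPr = 0$, that is $\Massmat{\MeaPr}\,c = \int_\MeaSp \BBasis\,h\,\dd\MeaPr$; since $\Massmat{\MeaPr}$ is invertible by hypothesis, $c = \Massmat{\MeaPr}^{-1}\int_\MeaSp \BBasis\,h\,\dd\MeaPr$.

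I would then substitute $h = \Koopd^t g = \Transpose{c_g}(\Koopd^t\BBasis)$ and identify the resulting integral with $\Transmat{\MeaPr}$. Reading off the $i$-th component, $\int_\MeaSp \Basis_i\,(\Koopd^t g)\,\dd\MeaPr = \sum_j (c_g)_j \int_\MeaSp \Basis_i\,(\Koopd^t\Basis_j)\,\dd\MeaPr$, and comparing with $(\Transmat{\MeaPr})_{ji} = \int_\MeaSp (\Koopd^t\Basis_j)\,\Basis_i\,\dd\MeaPr$ shows that $\int_\MeaSp \BBasis\,(\Koopd^t g)\,\dd\MeaPr = \Transpose{\Transmat{\MeaPr}}\,c_g$. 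Hence $c = \Massmat{\MeaPr}^{-1}\Transpose{\Transmat{\MeaPr}}\,c_g$, and $\ProjKoopd{\MeaPr}^t g = \Transpose{c}\BBasis = \Transpose{c_g}\Transmat{\MeaPr}\Massmat{\MeaPr}^{-1}\BBasis = \Transpose{c_g}\Koopdmat{\MeaPr}\BBasis$, where the last equalities use the symmetry of $\Massmat{\MeaPr}$ (so that $\Massmat{\MeaPr}^{-\intercal} = \Massmat{\MeaPr}^{-1}$) together with the definition $\Koopdmat{\MeaPr} = \Transmat{\MeaPr}\Massmat{\MeaPr}^{-1}$.

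The computation itself is routine linear algebra; the step needing the most care — and where a transpose slip is easiest — is matching $\int_\MeaSp \BBasis\,(\Koopd^t g)\,\dd\MeaPr$ against the definition $\Transmat{\MeaPr} = \int_\MeaSp (\Koopd^t\BBasis)\Transpose{\BBasis}\,\dd\MeaPr$, since the projection naturally produces $\Transpose{\Transmat{\MeaPr}}$ rather than $\Transmat{\MeaPr}$ itself. Keeping track of which index is contracted, and invoking the symmetry of the mass matrix at the end, is what reconciles the two and delivers the stated form $\Transpose{c_g}\Koopdmat{\MeaPr}\BBasis$.
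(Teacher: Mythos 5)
Your proposal is correct and follows essentially the same route as the paper: both reduce the $L^2(\MeaPr)$ projection of $\Koopd^t g$ to a quadratic minimisation over the coefficient vector $c$, identify the cross term with $\Transpose{\Transmat{\MeaPr}}c_g$, and solve to get $c=\Massmat{\MeaPr}^{-1}\Transpose{\Transmat{\MeaPr}}c_g$. The only cosmetic difference is that you derive this via the normal equations and invoke the symmetry of $\Massmat{\MeaPr}$ explicitly, whereas the paper writes out the expanded quadratic form and states its unique minimiser directly.
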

\begin{proof}
  By the definition of the projection operator,
  \begin{align}
    \Proj_{\BBasis}^\MeaPr \Koopd^tg
    &= \Transpose{\BBasis}\argmin_{c\in\TargetField^\LimBasis}
      \int_\MeaPr{[\Transpose{c}\BBasis-
      \Transpose{c_g}\left( \Koopd^t\BBasis \right)]}^2\,\dd\MeaPr\\
    &= \Transpose{\BBasis}\argmin_{c\in\TargetField^\LimBasis}
      \left[ \Transpose{c}\Massmat{\MeaPr} c -
      2\Transpose{c}\Transpose{\Transmat{\MeaPr}}c_g \right].
  \end{align}
  The minimiser is unique, and is given by
  $c=\Massmat{\MeaPr}^{-1}\Transpose{\Transmat{\MeaPr}}c_g$.  It
  therefore follows that
  \begin{equation}
    \ProjKoopd{\MeaPr}^t g
    = \Transpose{c}\BBasis=
    \Transpose{c_g}\Transmat{\MeaPr}\Massmat{\MeaPr}^{-1}\BBasis
    =\Transpose{c_g}\Koopdmat{\MeaPr}\BBasis.
  \end{equation}
\end{proof}
For the rest of the article, we assume that $\Massmat{\MeaPr}$ is
invertible.

Define the data-driven measure $\MeaPrDat$ from the input data, so
that
\begin{equation}\label{eq:def_meapr_dat}
  \MeaPrDat(x) = \frac{1}{\LimDat}\sum_{j=1}^\LimDat \delta(x-x_j),
\end{equation}
where $\delta(x)$ is the Dirac delta function.  For data coming from a
deterministic dynamical system, such as an ODE, we show that the
EDMD-operator is equal to the $L^2(\MeaPrDat)$-projection of
$\Koopd^t$. Further, we provide evidence for why the EDMD operator is
also a reasonable estimator in the SDE case.

\begin{proposition}\label{prop:edmd_koop_equal}
  If the system is deterministic, that is, $\Vol(x;\vParam)\equiv 0$, then
  the projected Koopman operator
  $\ProjKoopdDat^t=\Proj_{\BBasis}^{\MeaPrDat }\Koopd^t$ is equal to
  the EDMD operator $\EDMDop$:
  \begin{equation}
    \ProjKoopdDat^tg = \EDMDop g,
    \qquad \forall g\in \FunSpGFin.
  \end{equation}
\end{proposition}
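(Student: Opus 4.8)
The plan is to invoke \Cref{thm:proj_koop_rep} with the empirical measure $\MeaPrDat$ in place of $\MeaPr$, so that the projected Koopman operator $\ProjKoopdDat^t$ inherits the matrix representation $\Koopdmat{\MeaPrDat}=\Transmat{\MeaPrDat}\Massmat{\MeaPrDat}^{-1}$, and then to show that this matrix is exactly the EDMD matrix $\EDMDmat$ of~\eqref{eq:def_edmdmat}. Since the projected Koopman operator acts by $g\mapsto\Transpose{c_g}\Koopdmat{\MeaPrDat}\BBasis$ and the EDMD operator acts by $g\mapsto\Transpose{c_g}\EDMDmat\BBasis$, equality of the two matrices immediately gives $\ProjKoopdDat^t g=\EDMDop g$ for every $g=\Transpose{c_g}\BBasis\in\FunSpGFin$, which is the claim.

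First I would evaluate the mass matrix against $\MeaPrDat$. Because $\MeaPrDat$ is the empirical measure~\eqref{eq:def_meapr_dat}, integration collapses to an average over the data, so $\Massmat{\MeaPrDat}=\int_\MeaSp\BBasis\Transpose{\BBasis}\,\dd\MeaPrDat=\frac{1}{\LimDat}\sum_{j=1}^\LimDat\BBasis(x_j)\Transpose{\BBasis(x_j)}=\frac{1}{\LimDat}\BBasis(\Xv)\Transpose{\BBasis(\Xv)}$. The analogous reduction applies to the transition matrix, leaving only the task of evaluating $\Koopd^t\BBasis$ at the sample points $x_j$, and this is precisely where the determinism hypothesis enters.

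The crucial step is the observation that when $\Vol(x;\vParam)\equiv 0$ the flow $\FlowX{x}$ is deterministic, so the expectation in~\eqref{eq:def_koop_t} is trivial and $\Koopd^t\BBasis(x)=\BBasis(\FlowX{x})$ componentwise. Evaluating at $x_j$ and using that $y_j=\FlowXDiscrete{t}{x_j}$ holds exactly (not merely as one realisation) in the deterministic case, we get $\Koopd^t\BBasis(x_j)=\BBasis(y_j)$. Hence $\Transmat{\MeaPrDat}=\frac{1}{\LimDat}\sum_{j=1}^\LimDat\BBasis(y_j)\Transpose{\BBasis(x_j)}=\frac{1}{\LimDat}\BBasis(\Yv)\Transpose{\BBasis(\Xv)}$, and the factors of $1/\LimDat$ cancel in $\Koopdmat{\MeaPrDat}=\Transmat{\MeaPrDat}\Massmat{\MeaPrDat}^{-1}=\BBasis(\Yv)\Transpose{\BBasis(\Xv)}\left(\BBasis(\Xv)\Transpose{\BBasis(\Xv)}\right)^{-1}=\EDMDmat$, which is exactly the second line of~\eqref{eq:def_edmdmat}.

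I expect the only subtlety to be bookkeeping rather than a genuine obstacle. The representation $\Koopdmat{\MeaPrDat}=\Transmat{\MeaPrDat}\Massmat{\MeaPrDat}^{-1}$ via \Cref{thm:proj_koop_rep} needs $\Massmat{\MeaPrDat}$ invertible, which by the computation above is equivalent to $\BBasis(\Xv)\Transpose{\BBasis(\Xv)}$ being invertible, i.e.\ the rows of $\BBasis(\Xv)$ being linearly independent---the same condition already assumed for the second equality in~\eqref{eq:def_edmdmat}. Under that standing assumption the argument is a short chain of identities, and its essential content is the single deterministic equality $\Koopd^t\BBasis(x_j)=\BBasis(y_j)$. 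This identity is exactly what fails in the stochastic case, since there $\BBasis(y_j)$ is only one realisation of the random variable $\BBasis(\FlowXDiscrete{t}{x_j})$ whose expectation defines $\Koopd^t\BBasis(x_j)$; this is the discrepancy the subsequent error analysis must quantify.
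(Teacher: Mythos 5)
Your proposal is correct and follows essentially the same route as the paper's own proof: reduce both operators to their matrix representations, compute $\Massmat{\MeaPrDat}$ and $\Transmat{\MeaPrDat}$ under the empirical measure, and use determinism to replace $\Koopd^t\BBasis(x_j)$ by $\BBasis(y_j)$ so that $\Koopdmat{\MeaPrDat}=\Transmat{\MeaPrDat}\Massmat{\MeaPrDat}^{-1}=\EDMDmat$. Your explicit remarks on the invertibility of $\Massmat{\MeaPrDat}$ and on why the key identity fails in the stochastic case are consistent with the paper's standing assumptions and its subsequent discussion.
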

\begin{proof}
  We prove the equivalence of the finite-dimensional, linear operators
  by showing that their matrix representations are equal.  Remember
  that the EDMD matrix is given by
  \begin{equation}
    \EDMDmat = \BBasis(\Yv){\BBasis(\Xv)}^\intercal{\left(
        {\BBasis(\Xv)\BBasis(\Xv)}^\intercal \right)}^{-1}.  \nonumber
  \end{equation}
  Under the empirical measure $\MeaPrDat$, we have that
  \begin{equation}
    \Massmat{\MeaPrDat}=
    \int_\MeaSp\BBasis(x)\Transpose{\BBasis(x)}\,\dd\MeaPrDat(x)
    =\frac{1}{\LimDat} \left( \BBasis(\Xv)\Transpose{\BBasis(\Xv)} \right).
  \end{equation}
  Also, note that for data pairs $(x_j,y_j)$ coming from a
  deterministic system,
  \begin{equation}
    \Koopd^tg(x_j)= \mathbb E_W[g(\FlowX{x_j})] = g(y_j).
  \end{equation}
  In the same fashion as for the mass matrix, the empirical measure
  then implies that
  \begin{equation}
    \TransmatDat =
    \frac{1}{\LimDat}\left(
      (\Koopd^t\BBasis(\Xv))\Transpose{\BBasis(\Xv)}
    \right)
    =\frac{1}{\LimDat}\left(
      \BBasis(\Yv)\Transpose{\BBasis(\Xv)}
    \right).
  \end{equation}
  Thus, the result follows since
  $\KoopdmatDat=\TransmatDat \Massmat{\MeaPrDat}^{-1} =
  \EDMDmat$.
\end{proof}

The proof of \Cref{prop:edmd_koop_equal} indicates that the EDMD
operator approximates the projected Koopman operator for an SDE.\@ The
approximation error arises when approximating $\TransmatDat$ with
$\EDMDTransmat$, by replacing
$\Koopd^t\Basis(x_j)=\mathbb{E}_W\left[ \Basis(\FlowX{x_j}) \right]$
with $\Basis(y_j)$.  The proposed algorithm in the next section is
based on matching the Frobenius norm of the matrix representation of
operators, and we shall thus discuss how well the EDMD matrix
approximates the projected Koopman operator matrix.  To start, we show
that the matrix $\EDMDTransmat$ is an unbiased estimator of
$\TransmatDat$ with variance of the order $\mathcal{O}(T^{-1})$. To
show this we must first make some assumptions on the second-order
moments arising from the SDE and the sampling of the $x_j$.

\begin{myassumption}\label{ass:data_bounds}
  For a fixed $t>0$, let the data ${(x_j)}_{j=1}^T$ be drawn either
  independently from a probability measure $\mu$, or from
  a Markov chain $x_{j+1} = \FlowXDiscrete{t}{x_j}$, started at a
  known point $x_1\in\MeaSp$.
  Let $f,g\in
  \{\Basis_1,\dots,\Basis_\LimBasis\}$.
  Assume that there exists a $\tilde{\gamma}>0$ that satisfies the
  following.

  In the case when the $x_j$ are i.i.d.~drawn from $\mu$,
  \begin{equation}\label{eq:independent_second_moment_bound}
    \mathbb{E}_x\left[{g(x)}^2\mbox{Var}_W[f(\FlowXDiscrete{t}{x})]\right]
    \leq \tilde{\gamma}.
  \end{equation}
  In the case where the $x_j = \FlowXDiscrete{jt}{x_1}$, then for any
  $l\geq 1$ we have
  \begin{equation}\label{eq:markov_second_moment_bound}
    \mathbb{E}_W\left[{g(\FlowXDiscrete{lt}{x_1})}^2\mbox{Var}[f(\FlowXDiscrete{{(l+1)t}}{x_1})]\right]
    \leq \tilde{\gamma}.
  \end{equation}
\end{myassumption}

If the basis functions are bounded,
then~\eqref{eq:independent_second_moment_bound} holds automatically.
Note that~\eqref{eq:markov_second_moment_bound} will be satisfied for
an ergodic Markov chain with a stationary distribution $\mu$
for which~\eqref{eq:independent_second_moment_bound} holds.
In our numerical examples the basis functions are all bounded on
$\MeaSp$.

\begin{proposition}\label{prop:kmatrix_estimator}
  Fix $t>0$ and the basis $\{\Basis_1,\dots,\Basis_\LimBasis\}$, and
  let the assumptions of \Cref{ass:data_bounds} hold.
  Define $\TransmatDat =
  \frac{1}{\LimDat}\left(
    \Koopd^t\BBasis(\Xv)\right)\Transpose{{\BBasis(\Xv)}}$ and
  $\EDMDTransmat=\frac{1}{\LimDat}\BBasis(\Yv)\Transpose{{\BBasis(\Xv)}}$.

  Then, by taking expectations over the distributions of $\Xv$ and $\Yv$,
  \begin{equation}\label{eq:transmat_unbiased_estimator}
    \mathbb{E}_{\Xv}[\TransmatDat] = \mathbb{E}_{\Xv,\Yv}[\EDMDTransmat].
  \end{equation}
  Further, there exists a $\gamma>0$ independent of $T$,
  such that
  \begin{equation}\label{eq:transmat_estimator_variance_bound}
    \mathbb{E}_{\Xv,\Yv}\norm{\TransmatDat-\EDMDTransmat}_F^2
    \leq \gamma T^{-1}.
  \end{equation}
\end{proposition}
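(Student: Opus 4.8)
The plan is to work entry-by-entry on the two $\LimBasis\times\LimBasis$ matrices, exploiting that each per-sample residual has conditional mean zero given its input point. First I would prove \eqref{eq:transmat_unbiased_estimator} by conditioning on $\Xv$. Since $y_j$ is a realisation of $\FlowXDiscrete{t}{x_j}$, the definition of the Koopman operator gives $\mathbb{E}_W[\Basis_i(\FlowXDiscrete{t}{x_j})\mid x_j]=\Koopd^t\Basis_i(x_j)$, so columnwise $\mathbb{E}_{\Yv\mid\Xv}[\BBasis(\Yv)]=\Koopd^t\BBasis(\Xv)$. Right-multiplying by the $\Xv$-measurable factor $\Transpose{\BBasis(\Xv)}/\LimDat$ yields $\mathbb{E}_{\Yv\mid\Xv}[\EDMDTransmat]=\TransmatDat$, and a further expectation over $\Xv$ together with the tower property gives the claim.

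For the variance bound I would write the $(i,k)$ entry of the difference as $\frac{1}{\LimDat}\sum_{j=1}^\LimDat R_j^{(i)}\Basis_k(x_j)$, where $R_j^{(i)}:=\Koopd^t\Basis_i(x_j)-\Basis_i(y_j)$ satisfies $\mathbb{E}[R_j^{(i)}\mid x_j]=0$ by the identity just used. Then $\norm{\TransmatDat-\EDMDTransmat}_F^2=\LimDat^{-2}\sum_{i,k}\big(\sum_j R_j^{(i)}\Basis_k(x_j)\big)^2$, and expanding the inner square separates into diagonal contributions $(R_j^{(i)})^2\Basis_k(x_j)^2$ and cross terms indexed by $j\neq l$.

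The step I expect to be the main obstacle is showing the cross terms vanish in expectation, since under Markov sampling the $x_j$ are highly correlated and plain independence is unavailable. The resolution is that $\{R_j^{(i)}\Basis_k(x_j)\}_j$ is a martingale-difference sequence for the filtration $\mathcal{F}_j=\sigma(x_1,\dots,x_j)$: the factor $\Basis_k(x_j)$ is $\mathcal{F}_j$-measurable, while $R_j^{(i)}$ is the one-step residual of the transition $x_{j+1}=\FlowXDiscrete{t}{x_j}$ and hence obeys $\mathbb{E}[R_j^{(i)}\mid\mathcal{F}_j]=0$. Conditioning a cross term with $j<l$ on $\mathcal{F}_l$ then annihilates it; in the i.i.d.\ regime the same cancellation follows immediately from independence of the pairs $(x_j,y_j)$ and conditional mean zero.

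Only the $\LimDat$ diagonal terms remain. Conditioning each on $x_j$ gives $\mathbb{E}[\Basis_k(x_j)^2\,\mbox{Var}_W[\Basis_i(\FlowXDiscrete{t}{x_j})]]$, which \Cref{ass:data_bounds} bounds by $\tilde\gamma$ — via \eqref{eq:independent_second_moment_bound} in the i.i.d.\ case and via \eqref{eq:markov_second_moment_bound} (taking $g=\Basis_k$, $f=\Basis_i$, $l=j$, so that $x_j=\FlowXDiscrete{jt}{x_1}$) in the Markov case. Summing the diagonal contributions gives $\mathbb{E}\big(\sum_j R_j^{(i)}\Basis_k(x_j)\big)^2\leq\LimDat\tilde\gamma$, so each matrix entry contributes at most $\tilde\gamma/\LimDat$; summing over the $\LimBasis^2$ entries establishes \eqref{eq:transmat_estimator_variance_bound} with $\gamma=\LimBasis^2\tilde\gamma$.
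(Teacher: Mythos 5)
Your proof is correct and follows essentially the same route as the paper's: both arguments establish unbiasedness entrywise via the tower property and the identity $\mathbb{E}[\Basis_i(y_j)\mid x_j]=\Koopd^t\Basis_i(x_j)$, then expand the squared entry of the difference into diagonal and cross terms, kill the cross terms by a conditional-mean-zero argument, bound the diagonal terms by $\tilde\gamma$ via \Cref{ass:data_bounds}, and sum over the $\LimBasis^2$ entries to get $\gamma=\LimBasis^2\tilde\gamma$.

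The one place you genuinely diverge is the justification that the cross terms vanish, and your version is the more careful one in the Markov-sampling regime. The paper's proof, for a pair $l<k$, integrates out $y_l$ (the \emph{earlier} residual's output) while holding $x_k,y_k$ fixed; this silently requires $\mathbb{E}[f(y_l)\mid x_l,x_k,y_k]=\Koopd^tf(x_l)$, which is immediate for i.i.d.\ pairs but needs an argument when $y_l=x_{l+1}$ and $x_k$ is downstream of it. Your martingale-difference formulation conditions on $\mathcal{F}_l=\sigma(x_1,\dots,x_l)$ and annihilates the \emph{later} residual $R_l^{(i)}$, with the earlier summand $R_j^{(i)}\Basis_k(x_j)$ being $\mathcal{F}_{j+1}\subseteq\mathcal{F}_l$-measurable; this handles both sampling regimes uniformly and is the cleaner way to run the cancellation. (Minor nit: as stated, $R_j^{(i)}\Basis_k(x_j)$ is $\mathcal{F}_{j+1}$-measurable rather than $\mathcal{F}_j$-measurable, so strictly it is a martingale-difference array for the shifted filtration; the inequality you derive is unaffected.)
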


\begin{proof}
  We start by showing that~\eqref{eq:transmat_unbiased_estimator}
  holds. First, note that the ${(i,j)}^{\mathrm{th}}$ elements are given by
  \begin{align}
    {(\EDMDTransmat)}_{i,j}
    &= \frac{1}{T}\sum_{k=1}^T
      \Basis_i(y_k)\Basis(x_k)\\
    {(\TransmatDat)}_{i,j}
    &= \frac{1}{T}\sum_{k=1}^T
      \Koopd^t\Basis_i(x_k)\Basis_j(x_k).
  \end{align}
  Let $f,g \in \{\Basis_1,\dots,\Basis_\LimBasis\}$, and
  consider the following expectation with respect to the distribution of
  $(\Xv,\Yv)$.
  \begin{align}
    \mathbb{E}_{\Xv,\Yv}\left[ \sum_{k=1}^T f(y_k)g(x_k) \right]
    &= \sum_{k=1}^T \mathbb{E}_{x_k,y_k} \left[ f(y_k)g(x_k) \right]\\
    &= \sum_{k=1}^T
      \mathbb{E}_{x_k} \left[ \mathbb{E}_W[f(\FlowXDiscrete{t}{x^k})]g(x_k)  \right]\\
    &= \sum_{k=1}^T \mathbb{E}_{x_k} \left[ \Koopd^t f(x_k)g(x_k)  \right]\\
    &= \mathbb{E}_{\Xv} \left[\sum_{k=1}^T  \Koopd^t f(x_k)g(x_k)  \right].
  \end{align}
  The second line follows from the law of total expectations, and the
  third from the definition of the Koopman operator.
  It follows that $\mathbb{E}_{\Xv,\Yv} {(\EDMDTransmat)}_{i,j} =
  \mathbb{E}_{\Xv} {(\TransmatDat)}_{i,j}$ for
  $i,j=1,\dots,\LimBasis$, which
  proves~\eqref{eq:transmat_unbiased_estimator}.

  To prove the mean-square bound, we again consider
  $f,g\in\{\Basis_1,\dots,\Basis_\LimBasis\}$.
  By linearity
  \begin{align}\label{eq:estimator_variance}
    \mathbb{E}_{\Xv,\Yv}{\left[
    \sum_{k=1}^T  g(x_k)\{\Koopd^t f(x_k)-f(y_k)\}
    \right]}^2
    =\sum_{k=1}^T\mathbb{E}_{x_k,y_k}
    \left[ {g(x_k)}^2 {\{\Koopd^t f(x_k)-f(y_k)\}}^2 \right]\\
    + 2\sum_{l<k} \mathbb{E}_{x_k,x_l,y_k,y_l}
    \left[ g(x_k)g(x_l)\{\Koopd^t f(x_k)-f(y_k)\}\{\Koopd^t f(x_l)-f(y_l)\} \right].
  \end{align}
  All the terms in the second sum are zero. To see this, first note that,
  by the law of total expectations, each term in the sum are equal to
  \begin{equation}\label{eq:zero_covariance}
    \mathbb{E}_{x_k,x_l,y_k}
    \left[ g(x_k)g(x_l)\{\Koopd^t f(x_k)-f(y_k)\}\{\Koopd^t f(x_l)-\mathbb{E}_{y_l}[f(y_l)]\} \right].
  \end{equation}
  Then, because $\mathbb{E}_{y_l}[f(y_l)] =
  \mathbb{E}_W[f(\FlowXDiscrete{t}{x_l})] = \Koopd^t f(x_l)$, it
  follows that the final bracket in~\eqref{eq:zero_covariance} is zero.
  In a similar fashion we see that the terms in the
  first sum of~\eqref{eq:estimator_variance} equal
  \begin{equation}\label{eq:fg_variance_equivalent}
    \mathbb{E}_{x_k}\left[ {g(x_k)}^2\
      \mathbb{E}_{y_k}{\Bigl( \mathbb{E}_{y_k}[f(y_k)]  -
        f(y_k)\Bigr)}^2 \right]
    =
    \mathbb{E}_{x_k}\left[ {g(x_k)}^2\
      \mbox{Var}_{y_k}[f(\FlowXDiscrete{t}{x_k})] \right].
  \end{equation}
  By \Cref{ass:data_bounds} this expectation is bounded by $\tilde{\gamma}$ for $k=1,\dots,T$.
  We therefore have
  \begin{equation}
    \mathbb{E}_{\Xv,\Yv}{\left[
        \sum_{k=1}^T  g(x_k)\{\Koopd^t f(x_k)-f(y_k)\}
      \right]}^2 \leq \tilde{\gamma}T.
  \end{equation}
  The result therefore follows by summing over all the $\LimBasis^2$
  entries of the matrices,
  \begin{equation}
    \mathbb{E}_{\Xv,\Yv}\norm{\TransmatDat-\EDMDTransmat}_F^2
    \leq \frac{\LimBasis^2}{T^2} \tilde{\gamma} T
    = \gamma T^{-1},
  \end{equation}
  where $\gamma =  \LimBasis^2\tilde{\gamma}$.
\end{proof}

Using \Cref{prop:kmatrix_estimator} we finally state a result on how
well the matrix representation of the EDMD operator approximates the Koopman operator.

\begin{corollary}\label{corr:matrix_estimator_bound}
  Let the assumptions of \Cref{prop:kmatrix_estimator} hold with bound constant
  $\gamma$, and
  further assume that the second moment of the mass matrix Frobenius
  norm is bounded above, that is
  \begin{equation}
    \mathbb{E}_{\Xv}\norm{\MassmatDat^{-1}}_F^2 \leq \tilde{M},
  \end{equation}
  for some $\tilde{M}>0$, independent of $T$.

  Then the difference between the matrix representations of the EDMD
  and Koopman operators
  Koopman operator in the Frobenius norm is of order
  $T^{-1/2}$,
  \begin{equation}\label{eq:assumption_massmat_second_moment}
    \mathbb{E}_{\Xv,\Yv}\norm{\KoopdmatDat-\EDMDmat}_F
    \leq \sqrt{\frac{\gamma \tilde{M}}{T}}.
  \end{equation}
\end{corollary}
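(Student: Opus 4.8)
The plan is to exploit the fact that both matrix representations share a common right factor, the inverse mass matrix $\MassmatDat^{-1}$, so that their difference factors cleanly. Recall that $\KoopdmatDat = \TransmatDat\MassmatDat^{-1}$ by \Cref{thm:proj_koop_rep} applied to the empirical measure $\MeaPrDat$. I would first verify the analogous identity $\EDMDmat = \EDMDTransmat\MassmatDat^{-1}$ for the EDMD matrix. This follows directly from the definitions: since $\MassmatDat = \frac{1}{T}\BBasis(\Xv)\Transpose{\BBasis(\Xv)}$ and $\EDMDTransmat = \frac{1}{T}\BBasis(\Yv)\Transpose{\BBasis(\Xv)}$, multiplying out gives $\EDMDTransmat\MassmatDat^{-1} = \BBasis(\Yv)\Transpose{\BBasis(\Xv)}\bigl(\BBasis(\Xv)\Transpose{\BBasis(\Xv)}\bigr)^{-1} = \EDMDmat$, matching the pseudo-inverse formula~\eqref{eq:def_edmdmat}. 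Consequently,
\begin{equation}
  \KoopdmatDat - \EDMDmat = (\TransmatDat - \EDMDTransmat)\MassmatDat^{-1}.
\end{equation}

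Next, I would pass to norms using submultiplicativity of the Frobenius norm, $\norm{AB}_F \leq \norm{A}_F\norm{B}_F$, to obtain the pointwise (in $\Xv,\Yv$) bound
\begin{equation}
  \norm{\KoopdmatDat - \EDMDmat}_F \leq \norm{\TransmatDat - \EDMDTransmat}_F\,\norm{\MassmatDat^{-1}}_F.
\end{equation}
Taking the expectation over $(\Xv,\Yv)$ now leaves the expectation of a product of two random quantities. The key step is to apply the Cauchy--Schwarz inequality in the $L^2$ sense over the underlying probability space,
\begin{equation}
  \mathbb{E}_{\Xv,\Yv}[UV] \leq \sqrt{\mathbb{E}_{\Xv,\Yv}[U^2]}\,\sqrt{\mathbb{E}_{\Xv,\Yv}[V^2]},
\end{equation}
with $U = \norm{\TransmatDat - \EDMDTransmat}_F$ and $V = \norm{\MassmatDat^{-1}}_F$.

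To finish, I would observe that $\MassmatDat$ depends only on the input data $\Xv$, so $\mathbb{E}_{\Xv,\Yv}\norm{\MassmatDat^{-1}}_F^2 = \mathbb{E}_{\Xv}\norm{\MassmatDat^{-1}}_F^2 \leq \tilde{M}$ by hypothesis, while the first factor is controlled by~\eqref{eq:transmat_estimator_variance_bound} of \Cref{prop:kmatrix_estimator}, giving $\mathbb{E}_{\Xv,\Yv}\norm{\TransmatDat - \EDMDTransmat}_F^2 \leq \gamma T^{-1}$. Multiplying these two bounds yields exactly $\sqrt{\gamma\tilde{M}/T}$. The only genuine subtlety, and thus the main thing to get right, is the handling of the product expectation: one must resist factoring $\mathbb{E}[UV]$ into $\mathbb{E}[U]\,\mathbb{E}[V]$, which would require an independence that fails since both factors depend on $\Xv$, and instead invoke Cauchy--Schwarz, which conveniently converts the two available second-moment bounds into the stated first-moment bound on the operator-matrix error.
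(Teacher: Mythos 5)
Your proposal is correct and follows essentially the same route as the paper: factor out the common $\MassmatDat^{-1}$, bound the Frobenius norm of the product by the product of norms, and apply Cauchy--Schwarz to combine the two second-moment bounds. Your explicit remarks that $\EDMDmat = \EDMDTransmat\MassmatDat^{-1}$ and that $\MassmatDat$ depends only on $\Xv$ (so the expectation over $\Yv$ is vacuous for that factor) are slightly more careful than the paper's write-up, but the argument is identical.
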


\begin{proof}
  From \Cref{prop:edmd_koop_equal,prop:kmatrix_estimator} and their proofs
  we know that the matrix representations of the projected Koopman and
  EDMD operators are $\KoopdmatDat=\TransmatDat\MassmatDat^{-1}$ and
  $\EDMDmat=\EDMDTransmat\MassmatDat^{-1}$ respectively.
  Further, the Frobenius norm of a product is bounded by the product
  of the norms,
  \begin{equation}
    \norm{(\TransmatDat-\EDMDTransmat)\MassmatDat^{-1}}
    \leq \norm{{\TransmatDat-\EDMDTransmat}}_F\norm{\MassmatDat^{-1}}_F.
  \end{equation}
  By the Cauchy-Schwarz inequality,
  \begin{align}
    \mathbb{E}_{\Xv,\Yv}\left[ \norm{{\TransmatDat-\EDMDTransmat}}_F\norm{\MassmatDat^{-1}}_F \right]
    \leq
    \sqrt{\mathbb{E}_{\Xv,\Yv}\norm{\TransmatDat-\EDMDTransmat}_F^2}
    \sqrt{\mathbb{E}_{\Xv,\Yv}\norm{\MassmatDat^{-1}}_F^2}.
  \end{align}
  The result now follows
  by applying~\eqref{eq:transmat_estimator_variance_bound}
  and~\eqref{eq:assumption_massmat_second_moment} to the right hand side.
\end{proof}

For completeness we note that the EDMD method can be adjusted to
approximate the Perron--Frobenius operator by the matrix
\cite{klus2016On}
\begin{equation}\label{eq:PFO_mat_rep}
  \Transpose{\TransmatDat} \Massmat{\MeaPrDat}^{-1}\, .
\end{equation}

%%%%%%%%%%%%%%%%%%%%%%%%%%%%%%%%%%%%%
\subsection{Parameter estimation using projected Koopman
  operators}\label{subsec:param_projkoop}
%%%%%%%%%%%%%%%%%%%%%%%%%%%%%%%%%%%%%

In the following, we write $\Koopd^t(\vParam)$ when we want to
emphasise the $\vParam$-dependence of the time-$t$ Koopman operator.
We will emphasise the $\vParam$-dependence in a similar fashion for
the projected Koopman operators and their matrix representations, when
needed.

Assume the output data $\Yv$ is generated from the SDE with a
particular parameter $\vParam^*$ and initial conditions $\Xv$.  Then
$\ProjKoopdDat^t(\vParam^* )\approx \EDMDop$, with equality whenever
$\Vol(x;\vParam^* )\equiv 0$.  Further,
\Cref{corr:matrix_estimator_bound} motivates estimating $\vParam^*$
by solving the minimisation problem
\begin{equation}\label{eq:min_koop_edmd}
  \min_{\vParam\in\ParamSet}\norm{\KoopdmatDat(\vParam)-\EDMDmat}_F^2.
\end{equation}
We choose to minimise the Frobenius norm instead of the matrix norm
induced by the inner product of $\FunSpGFin$, because it is cheaper to
calculate, and numerical investigations indicated similar
performance. Further discussion on the formulation using matrix norms
is given in \Cref{sec:variations_algo}.

Calculating the matrix $\TransmatDat(\vParam)$ in
$\KoopdmatDat(\vParam)=\TransmatDat(\vParam) \MassmatDat^{-1}$
requires the solution of the SDE, which in most cases would make this
method intractable.  We can, however, take advantage of the
infinitesimal generator of the SDE to calculate
$\KoopdmatDat(\vParam)$ cheaply.  In the remainder of the section, we
define the infinitesimal generator, explain how it can be used to
calculate $\KoopdmatDat(\vParam)$, and summarise the parameter
estimation method in \Cref{alg:edmdparam}.

\begin{definition}
  The \emph{continuous-time Koopman operator}, $\Koopd$, is the
  infinitesimal generator of the time-$t$ Koopman operators,
  \begin{equation}\label{eq:inf_gen_def}
    \Koopd g(x) = \lim_{t\downarrow 0}\frac{\Koopd^tg(x)-g(x)}{t}.
  \end{equation}
\end{definition}
It is a well-known result that the continuous-time Koopman operator is
a linear, second-order differential operator on well-behaved
functions.  See, for example,~\cite[Ch.~17]{cohen2015stochastic} for a
discussion and proof.
\begin{theorem}\label{thm:inf_gen_c2}
  Let $g\in C^2(\MeaSp)$, then $\Koopd(\vParam) = \DiffGen(\vParam)$,
  where
  \begin{equation}\label{eq:inf_gen_c2}
    \DiffGen(\vParam) g(x) =  \DiffGen \left[ g(x) ; \vParam \right]  := \Drift(x;\vParam)\frac{\dd g(x)}{\dd x} + \frac{[\Vol(x;\vParam)]^2}{2} \frac{\dd^2 g(x)}{\dd x^2}.
  \end{equation}
\end{theorem}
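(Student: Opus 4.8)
The plan is to apply Itô's formula to $g(\FlowXDiscrete{u}{x})$, take expectations over the Brownian paths so that the martingale part drops out, and then differentiate the resulting time average at $u=0$. Since $g\in C^2(\MeaSp)$ and the SDE~\eqref{eq:general_sde} admits a strong solution with $\FlowXDiscrete{0}{x}=x$, Itô's lemma in integrated form gives
\begin{equation}
  g(\FlowXDiscrete{s}{x}) - g(x)
  = \int_0^s \DiffGen[g(\FlowXDiscrete{u}{x});\vParam]\,\dd u
  + \int_0^s \Vol(\FlowXDiscrete{u}{x};\vParam)\,g'(\FlowXDiscrete{u}{x})\,\dd W_u,
\end{equation}
where the $\dd u$-coefficient is precisely the second-order operator $\DiffGen[\,\cdot\,;\vParam]$ of~\eqref{eq:inf_gen_c2}.

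Next I would take $\mathbb{E}_W$ of both sides. The key step is to show that the stochastic integral has zero mean, so that
\begin{equation}
  \Koopd^s g(x) - g(x)
  = \mathbb{E}_W\!\left[\int_0^s \DiffGen[g(\FlowXDiscrete{u}{x});\vParam]\,\dd u\right]
  = \int_0^s \mathbb{E}_W\!\left[\DiffGen[g(\FlowXDiscrete{u}{x});\vParam]\right]\,\dd u,
\end{equation}
the final equality following from Fubini's theorem. Dividing by $s$ and letting $s\downarrow0$, the right-hand side is a time average that converges to its integrand at $u=0$ by the fundamental theorem of calculus, provided $u\mapsto \mathbb{E}_W[\DiffGen[g(\FlowXDiscrete{u}{x});\vParam]]$ is right-continuous at $0$. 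Because $\FlowXDiscrete{0}{x}=x$ deterministically, this limit equals $\DiffGen[g(x);\vParam]$, which is exactly the claimed identity $\Koopd(\vParam) g(x) = \DiffGen(\vParam) g(x)$.

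The hard part is justifying the two interchanges rigorously. The vanishing of the Itô-integral expectation requires $\mathbb{E}_W\int_0^s \Vol(\FlowXDiscrete{u}{x};\vParam)^2\,g'(\FlowXDiscrete{u}{x})^2\,\dd u<\infty$, so that the integral is a genuine (rather than merely local) martingale, while the right-continuity of $u\mapsto\mathbb{E}_W[\DiffGen[g(\FlowXDiscrete{u}{x});\vParam]]$ at $0$ needs a dominated-convergence argument exploiting path-continuity of $\FlowXDiscrete{u}{x}$. Both hold under the standing \emph{well-behaved} assumptions on $\Drift$ and $\Vol$ (for instance, global Lipschitz and linear-growth conditions) together with mild growth control on $g,g',g''$. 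The cleanest way to accommodate unbounded coefficients is localisation: define the stopping times $\tau_n=\inf\{u:\abs{\FlowXDiscrete{u}{x}-x}\geq n\}$, apply the above up to $s\wedge\tau_n$, on which the stopped stochastic integral is a true martingale, take expectations, and pass $n\to\infty$ by dominated convergence. As this is a standard computation, I would present it under an explicit boundedness hypothesis and otherwise refer to~\cite[Ch.~17]{cohen2015stochastic} for the localisation details.
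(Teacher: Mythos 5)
The paper does not prove this theorem itself; it defers to~\cite[Ch.~17]{cohen2015stochastic}, and your It\^o-formula argument (kill the martingale part by taking expectations, apply Fubini, divide by $s$ and let $s\downarrow 0$, with localisation to handle unbounded coefficients) is precisely the standard proof that such a reference supplies. Your treatment of the two interchanges and the right-continuity at $u=0$ is the correct place to put the technical weight, so the proposal is correct and consistent with the paper's intent.
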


In the remainder of the section, we assume that the basis functions
$\Basis_j$ are sufficiently smooth for~\eqref{eq:inf_gen_c2} to hold
on $\FunSpGFin$. In addition, we require that $\FunSpGFin$ is
invariant under $\DiffGen(\vParam)$, that is,
$\DiffGen(\vParam) g\in\FunSpGFin$ for all $g\in \FunSpGFin$.  The
invariance assumption puts smoothness constraints on
$\Drift(x;\vParam)$ and $\Vol(x;\vParam)$.
\begin{theorem}\label{eq:proj_koop_mat_exp}
  Assume that $\Koopd=\DiffGen$ on $\FunSpGFin$, and that $\FunSpGFin$
  is invariant under $\DiffGen$.  Then $\Koopd^t=\mathrm{e}^{t\Koopd}$
  on $\FunSpGFin$, and the matrix representation of
  $\ProjKoopd{\MeaPr}^t$ is given by
  \begin{equation}\label{eq_exp_relation}
    \Koopdmat{\MeaPr}= \exp\left( {t\Lmat{\MeaPr}\Massmat{\MeaPr}^{-1}} \right),
  \end{equation}
  where
  \begin{equation}
    \Lmat{\MeaPr} = \int_\MeaSp \left(\DiffGen\BBasis\right)\Transpose{\BBasis}\,\dd\MeaPr.
  \end{equation}
\end{theorem}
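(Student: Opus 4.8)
The plan is to reduce everything to finite-dimensional linear algebra by encoding the action of the generator on $\FunSpGFin$ as a single matrix, and then to read off both the semigroup identity and the matrix representation from this encoding. The engine is the hypothesis that $\FunSpGFin$ is finite-dimensional and invariant under $\DiffGen=\Koopd$: since $\Basis_1,\dots,\Basis_\LimBasis$ are linearly independent and span $\FunSpGFin$, invariance means there is a unique matrix $\vect{G}\in\Real^{\LimBasis\times\LimBasis}$ with $\DiffGen\BBasis=\vect{G}\BBasis$, i.e.\ $\DiffGen\Basis_i=\sum_j G_{ij}\Basis_j$.

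For the first claim, I would show $\Koopd^t\BBasis=\mathrm{e}^{t\vect{G}}\BBasis$ and then note that the semigroup identity is the componentwise consequence. Set $V(t)=\mathrm{e}^{t\vect{G}}\BBasis$, a vector of functions lying in $\FunSpGFin$. Differentiating and commuting $\vect{G}$ with $\mathrm{e}^{t\vect{G}}$ gives $\dot V=\mathrm{e}^{t\vect{G}}\vect{G}\BBasis=\mathrm{e}^{t\vect{G}}\DiffGen\BBasis=\DiffGen V$, with $V(0)=\BBasis$; since $V(t)\in\FunSpGFin$ and $\Koopd=\DiffGen$ there, $V$ solves the backward Cauchy problem $\dot W=\Koopd W$, $W(0)=\BBasis$. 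The Koopman trajectory $t\mapsto\Koopd^t\BBasis$ solves the same problem because $\Koopd$ is its generator (the Kolmogorov backward equation). Uniqueness of classical solutions of this abstract Cauchy problem then forces $\Koopd^t\BBasis=V(t)=\mathrm{e}^{t\vect{G}}\BBasis$, and, since $\DiffGen^k\BBasis=\vect{G}^k\BBasis$ gives $\mathrm{e}^{t\DiffGen}\BBasis=\mathrm{e}^{t\vect{G}}\BBasis$, we conclude $\Koopd^t=\mathrm{e}^{t\Koopd}$ on $\FunSpGFin$.

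For the matrix representation I would simply pair the two relations above with $\BBasis^\intercal$ and integrate against $\MeaPr$. Multiplying $\Koopd^t\BBasis=\mathrm{e}^{t\vect{G}}\BBasis$ on the right by $\BBasis^\intercal$ and integrating yields $\Transmat{\MeaPr}=\int_\MeaSp(\Koopd^t\BBasis)\BBasis^\intercal\,\dd\MeaPr=\mathrm{e}^{t\vect{G}}\Massmat{\MeaPr}$; doing the same with $\DiffGen\BBasis=\vect{G}\BBasis$ gives $\Lmat{\MeaPr}=\vect{G}\Massmat{\MeaPr}$, hence $\vect{G}=\Lmat{\MeaPr}\Massmat{\MeaPr}^{-1}$ by invertibility of $\Massmat{\MeaPr}$. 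Feeding these into the representation $\Koopdmat{\MeaPr}=\Transmat{\MeaPr}\Massmat{\MeaPr}^{-1}$ of \Cref{thm:proj_koop_rep} cancels the mass matrices and leaves $\Koopdmat{\MeaPr}=\mathrm{e}^{t\vect{G}}=\exp(t\Lmat{\MeaPr}\Massmat{\MeaPr}^{-1})$, as claimed.

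The main obstacle is the rigour behind the first claim, not the essentially algebraic second one. One must justify that the Koopman trajectory $\Koopd^t\BBasis$ stays in $\FunSpGFin$ so that the identification $\Koopd=\DiffGen$ is legitimate along the whole trajectory, and that the abstract Cauchy problem has a unique classical solution. Finite-dimensionality together with invariance under $\DiffGen$ is precisely what makes this work: it turns the generator into a matrix and the backward equation into a linear ODE, where existence, uniqueness and the exponential formula are standard. I would be explicit about the standing regularity hypotheses under which this is valid, namely $\Basis_j\in C^2(\MeaSp)$ so that \Cref{thm:inf_gen_c2} applies and $\DiffGen g\in\FunSpGFin$ for every $g\in\FunSpGFin$, which are exactly the assumptions stated before the theorem.
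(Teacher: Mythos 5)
Your proof is correct, and it reaches the conclusion by a route that differs from the paper's in two ways. First, the paper simply asserts the identity $\Koopd^t=\mathrm{e}^{t\Koopd}$ on $\FunSpGFin$ (``follows from the definition of $\Koopd$ and the fact that linear operators are bounded on finite-dimensional spaces''), whereas you actually prove it: invariance yields a structure matrix $G$ with $\DiffGen\BBasis=G\BBasis$, and the backward Kolmogorov equation plus uniqueness for the resulting linear ODE forces $\Koopd^t\BBasis=\mathrm{e}^{tG}\BBasis$. This is the more honest argument, and you rightly flag the one genuine subtlety (that the trajectory must remain in $\FunSpGFin$ for the identification $\Koopd=\DiffGen$ to apply along it) — a point the paper glosses over as well. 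Second, for the matrix representation the paper stays inside the projection framework: it reruns the least-squares argument of \Cref{thm:proj_koop_rep} to show that $\Proj_{\BBasis}^\MeaPr\DiffGen$ has matrix $\Lmat{\MeaPr}\Massmat{\MeaPr}^{-1}$, then transfers the operator exponential to the matrix representations. You instead bypass the projection entirely: pairing $\DiffGen\BBasis=G\BBasis$ and $\Koopd^t\BBasis=\mathrm{e}^{tG}\BBasis$ with $\Transpose{\BBasis}$ and integrating gives $\Lmat{\MeaPr}=G\Massmat{\MeaPr}$ and $\Transmat{\MeaPr}=\mathrm{e}^{tG}\Massmat{\MeaPr}$, so $\Koopdmat{\MeaPr}=\Transmat{\MeaPr}\Massmat{\MeaPr}^{-1}=\mathrm{e}^{tG}=\exp(t\Lmat{\MeaPr}\Massmat{\MeaPr}^{-1})$. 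The two routes coincide here because invariance makes the projection act as the identity on $\FunSpGFin$; the paper's version has the advantage of reusing \Cref{thm:proj_koop_rep} verbatim and of making sense even when $\FunSpGFin$ is only approximately invariant, while yours is more concrete and makes the cancellation of the mass matrices transparent.
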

\begin{proof}
  The equality $\Koopd^t=\mathrm{e}^{t\Koopd}$ on $\FunSpGFin$ follows
  from the definition of $\Koopd$ and the fact that linear operators
  are bounded on finite-dimensional spaces.  This also means that this
  exponentiation relation holds for the matrix representations of
  $\Koopd^t$ and $\Koopd$, with respect to a given inner product.

  Following the same argument as in the proof of
  \Cref{thm:proj_koop_rep}, we can show that
  \begin{equation}
    \Proj_{\BBasis}^\MeaPr \DiffGen g
    = \Transpose{c_g}\Lmat{\MeaPr}\Massmat{\MeaPr}^{-1}\BBasis,
    \qquad
    \forall g=\Transpose{c_g}\BBasis\in \FunSpGFin.
  \end{equation}
  Since
  $\Proj_{\BBasis}^\MeaPr \Koopd = \Proj_{\BBasis}^\MeaPr \DiffGen$ on
  $\FunSpGFin$, their matrix representations are the same.  It follows
  that the matrix representation $\Koopdmat{\MeaPr}$ of
  $\ProjKoopd{\MeaPr}^t$ is given by equation~\eqref{eq_exp_relation}.
\end{proof}

%%%%%%%%%%%%%%%%%%%%%%%%%%%%%%%%%%%%%
\subsection{Proposed Algorithm}\label{sec:chosen_algo}
%%%%%%%%%%%%%%%%%%%%%%%%%%%%%%%%%%%%%

\Cref{alg:edmdparam} describes our SDE parameter estimation method,
based on the minimisation problem~\eqref{eq:min_koop_edmd}, and
\Cref{eq:proj_koop_mat_exp}.  With a large number of basis functions,
or when extending the method to higher-dimensional state spaces,
calculating the projected Koopman and EDMD matrices may become
expensive. Computationally efficient implementations of (extended)
dynamic mode decomposition, based on SVD factorisations, can be used
to alleviate such issues. See, for example, Tu
\emph{et. al.}~\cite{tu2014dynamic} for an overview.  In the numerical
example of \Cref{sec:ou_proc}, however, we see that the algorithm
performs as well as existing SDE parameterisation methods already with
three basis functions.

\begin{algorithm}
  \caption{EDMD parameter estimation}
  \label{alg:edmdparam}
  \begin{algorithmic}[1]
    \REQUIRE{Data $\Xv = \vecbracks{x_1,\dots,x_\LimDat}$,
      $\Yv=\vecbracks{y_1,\dots,y_\LimDat}$, and time-step $t>0$}
    \REQUIRE{Basis functions
      $\BBasis(x)=\vecbracks{\Basis_1(x),\dots,\Basis_\LimBasis(x)}^\intercal$}
    \REQUIRE{Infinitesimal generator
      $\DiffGen(\vParam) = \Drift(x;\vParam)\frac{\dd }{\dd x} +
      {\textstyle \frac{1}{2}}{\Vol(x;\vParam)}^2 \frac{\dd^2 }{\dd
        x^2}$} \STATE{Set
      $M\leftarrow \frac{1}{\LimDat}
      \BBasis(\Xv)\Transpose{\BBasis(\Xv)}$} \STATE{Set
      $A \leftarrow \frac{1}{\LimDat}
      \BBasis(\Yv)\Transpose{\BBasis(\Xv)}M^{-1}$} \STATE{Define
      $\MeaPr(x) = \frac{1}{\LimDat}\sum_{j=1}^\LimDat \delta(x-x_j)$}
    \STATE{Define
      $L(\vParam) = \int_\MeaSp
      \left(\DiffGen(\vParam)\BBasis\right)\Transpose{\BBasis}\,\dd\MeaPr$}
    \STATE{Solve
      \begin{equation*}
        \hat{\vParam} =
        \argmin_{\vParam\in\ParamSet}\norm{\,\exp\left(
            {tL(\vParam)M^{-1}} \right)-A}_F^2.
      \end{equation*}
      \RETURN $\hat{\vParam}$ }
  \end{algorithmic}
\end{algorithm}
\begin{myremark}
  If $\DiffGen(\vParam)$ is linear in the parameters, one can
  pre-calculate the integrals of the matrix $L(\vParam)$ in
  \Cref{alg:edmdparam} such that each function evaluation of the
  minimisation problem is reduced to scalar-matrix and matrix
  exponentiation operations. We take advantage of this for our
  examples in \Cref{sec:num_examples}.
\end{myremark}
\begin{myremark}
  The theoretical motivation for the algorithm assumed that
  $\FunSpGFin$ was invariant under $\DiffGen$. Our choices of basis
  functions in the numerical examples do not necessarily satisfy this
  assumption.
\end{myremark}
\begin{myremark}
  Instead of matching the exponential matrix $\exp\left(
    {tL(\vParam)M^{-1}} \right)$ to the EDMD matrix $A$ in step five,
  one can match $L(\vParam)M^{-1}$ to a branch of the matrix
  logarithm $\log(A)/t$.
  As noted by~\cite{mauroy2016linear,mauroy2017koopman}, it is not clear which
  branch of the matrix logarithm to choose if we follow this approach.
\end{myremark}

%%%%%%%%%%%%%%%%%%%%%%%%%%%%%%%%%%%%%
\section{Numerical Examples}\label{sec:num_examples}
%%%%%%%%%%%%%%%%%%%%%%%%%%%%%%%%%%%%%

%%%%%%%%%%%%%%%%%%%%%%%%%%%%%%%%%%%%%
\subsection{The Ornstein--Uhlenbeck Process}\label{sec:ou_proc}
%%%%%%%%%%%%%%%%%%%%%%%%%%%%%%%%%%%%%

In this section we compare the performance of the proposed EDMD-based
parameter estimation algorithm to existing methods.  The numerical
example and data is taken from Hurn
\emph{et. al.}~\cite{hurn2007seeing}, where the authors compare the
performance of \num{14} different SDE parameter estimation algorithms.
The data from the comparison paper is available at
\url{http://www.ncer.edu.au/resources/data-and-code.php}.  The
Ornstein-Uhlenbeck equation is the SDE
\begin{equation}\label{eq:ou_sde}
  \dd \rvX= \Param_1(\Param_2-\rvX)\, \dd t +  \Param_3\dd W_t \, .
\end{equation}
The infinitesimal generator of solutions to this SDE is
\begin{equation}
  \DiffGen(\vParam)
  = \Param_1(\Param_2-x)\frac{\dd}{\dd x} +
  \frac{\Param_3^2}{2}\frac{\dd^2}{\dd x^2}.
\end{equation}
Note that evaluation of the matrix $\Lmat{\MeaPr}(\vParam)$ can be
very cheap, by pre-calculating the integrals involved:
\begin{align}
  \Lmat{\MeaPr}(\vParam)
  = \Param_1\Param_2\int_\MeaSp \frac{\dd \BBasis(x)}{\dd
  x}\Transpose{\BBasis(x)}\,\dd \MeaPr(x)
  &- \Param_1 \int_\MeaSp x\frac{\dd \BBasis(x)}{\dd
    x}\Transpose{\BBasis(x)}\,\dd \MeaPr(x)\\
  &+ \frac{\Param_3^2}{2}\int_{\MeaSp}\frac{\dd^2 \BBasis(x)}{\dd
    x^2}\Transpose{\BBasis(x)}\,\dd \MeaPr(x).\nonumber
\end{align}
Then, subsequent evaluations of $\Lmat{\MeaPr}(\vParam)$ are simply
scalar-matrix computations.

The data set from~\cite{hurn2007seeing} consists of \num{2000}
independent sample paths with \num{501} data points each, separated
with a time step $\Delta t = 1/12$. They are all drawn from the SDE
with parameter $\vParam^* =(0.2,0.08,0.03)$.  For each sample path, we
employ \Cref{alg:edmdparam} to estimate $\vParam^* $.  For this
example, we test the performance of radial basis functions (RBFs) of
the form
\begin{equation}\label{eq_Gauss_RBF}
  \Basis_j(x) = \exp\left\{ -l^2{( x - c_j )}^2 \right\},\qquad j=1,\dots,\LimBasis,
\end{equation}
where $l>0$ is a given length scale, and $c_j$ an increasing
sequence of centre points.  With $\LimBasis$ basis functions, the
centre points are chosen to be spaced at a distance
$\Delta x_\LimBasis>0$ apart, so that
$c_1 = \min\{\Xv\}+\Delta x_\LimBasis$ and
$c_N= \max\{\Xv\}-\Delta x_\LimBasis$. The length scale is set
to $l=1/\Delta x_\LimBasis$. The parameter estimation is done with
$N=3,4,5$. In \Cref{fig:rbfs_3}, the basis functions, when $N=3$, are
shown for the first sample path in the data set.
\begin{figure}[htb]
  \centering
  \includegraphics[width=0.85\textwidth,axisratio=1.62]{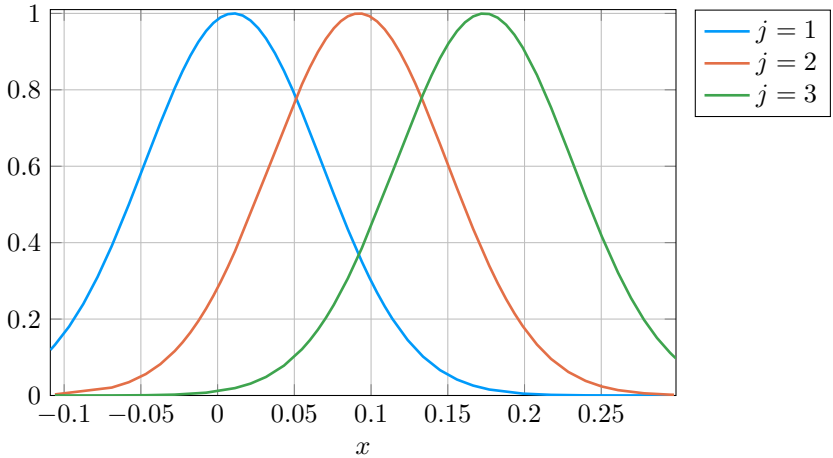}
  \caption{The RBFs $\Basis_j(x)$ for $N=3$, with centre points and
    length scale calculated based on the first sample path in the data
    set.}\label{fig:rbfs_3}
\end{figure}
\begin{myremark}
  The basis functions depend on the data, which is not consistent with the
  assumptions of
  \Cref{prop:kmatrix_estimator,corr:matrix_estimator_bound}.
  We note, however, that as $\LimDat$ increases, the probability that
  $\max\{\Xv\}$ and $\min\{\Xv\}$ varies becomes small.
  One can also restrict the $c_j$ to be within a box, so that
  eventually the basis functions stay fixed.
\end{myremark}

The minimisation step uses the BFGS
algorithm~\cite{nocedal2006numerical}, with interpolation
backtracking, implemented by the \texttt{Optim.jl}
package~\cite{mogensen2018optim} in the Julia programming
language~\cite{bezanson2017julia}.  Derivatives are calculated using
finite differences.  The initial guess is set to $\vParam^* $, to
be consistent with the comparisons done in Hurn
\emph{et. al.}~\cite{hurn2007seeing}.
Numerical investigations, not included here, show that the objective function
is convex for the different data sets.

In \Cref{tbl:ou_comparison}, statistics of the performance of the
algorithm for $N=3,4,5$ is presented.  For a few of the \num{2000}
sample paths, the algorithm estimation is far off the correct
parameter $\vParam^* $, reporting $|\hat{\Param}_2|$ to be orders of
magnitude too large. These are typically sample paths for which the
data predominantly stays on one side of the long-term mean $\Param_2^*$, so that
the mean-reversion property of the process is less apparent.
We exclude these paths from the calculation of
the statistics, and instead report any values where
$|\hat{\Param}_j |>1$, for at least one of $j=1,2,3$, as failures.
Let $\vParam^{k,*}$ denote the reported parameter for the
$k^{\text{th}}$ sample path.  The table shows the bias and root mean
squared (RMS) values for the estimates, which are calculated as
$\frac{1}{\LimDat}\sum_{k=1}^{2000} (\Param_j^{k,*}-\Param_j^* )$ and
$\sqrt{\frac{1}{\LimDat}\sum_{k=1}^{2000} {(\Param_j^{k,*}-\Param_j^*
    )}^2}$ respectively.  These values are compared to the results
from the exact maximum likelihood (EML) reference algorithm used in
Hurn \emph{et. al.}~\cite{hurn2007seeing}. Note that parameter
estimation with EML is only available if one knows the transition
probability density of the associated SDE.  The other \num{13}
algorithms reported very similar performance statistics. From the
table, we can see that our proposed algorithm performs just as well as
the reference EML algorithm.
\begin{table}[htb]
  \centering
  \begin{tabular}{l*{3}{ S[table-auto-round,table-format=-1.4]
    S[table-auto-round,table-format=-1.4]}r}
    \toprule
    Alg&\multicolumn{2}{c}{$\Param_1$}&\multicolumn{2}{c}{$\Param_2$}&\multicolumn{2}{c}{$\Param_3$}&Fail\\
    \cmidrule(lr){2-3}\cmidrule(lr){4-5}\cmidrule(lr){6-7}
       &{Bias}&{RMSE}&{Bias}&{RMSE}&{Bias}&{RMSE}&\\
    \midrule
    RBF(3)&0.08500297960614991&0.17477197244896076&-0.0013887799878454991&0.03166719057401393&-0.0000272341&0.0023271247467603704&7\\
    RBF(4)&0.085953839248921&0.18799699313965615&-0.001996290952441892&0.042771997777942564&0.00019422483091887527&0.001877228488635663&22\\
    RBF(5)&0.0907590910354649&0.18735773850369386&-0.0006607759497238338&0.03723568550108482&0.0002664163213895104&0.0017060640461324563&19\\
    EML&0.1101&0.1780&-0.0006&0.0227&0.0001&0.0010&---\\
    \bottomrule
  \end{tabular}
  \caption{Performance statistics comparing \Cref{alg:edmdparam} with
    different number of RBFs. The row with the EML results are taken
    from~\cite{hurn2007seeing}.  }\label{tbl:ou_comparison}
\end{table}

%%%%%%%%%%%%%%%%%%%%%%%%%%%%%%%%%%%%%
\subsubsection{Performance with increasing amount of
  data}\label{subsec:ouproc_example_conv}
%%%%%%%%%%%%%%%%%%%%%%%%%%%%%%%%%%%%%

We end the Ornstein-Uhlenbeck example by investigating the estimation
improvement with increasing amount of data $\LimDat$ while keeping
the time step fixed. \Cref{corr:matrix_estimator_bound} suggests that
the EDMD matrix will better approximate the projected Koopman matrix
as $\LimDat$ increases.
To this end, we created \num{2000} sample paths of the
solution to~\eqref{eq:ou_sde}, with $\vParam^* = (0.2,0.08,0.03)$, all
started at the initial condition $X_0 = \Param_2=0.08$.  The data is
stored at time steps $\Delta t=1/12$ apart, and generated from the
exact conditional distribution given by
\begin{equation}
  \rvX^x \sim
  \mathcal N(\Param_2+(x-\Param_2)e^{-\Param_1 t},\,
  \Param_3^2\left( 1-e^{-2\Param_1 t} \right)/2\Param_1),
\end{equation}
so that $x_{j+1}=\FlowXDiscrete{\Delta t}{x_{j}}$ and $y_{j}=x_{j+1}$
for $j=1,\dots,\LimDat-1$.
\Cref{fig:ou_rmse_convergence} reports the root mean squared error of
the estimators, with data amount $\LimDat = 500\times 2^{j}$, for
$j=0,1,\dots,9$.  We use RBFs calculated in the same way as in
\Cref{sec:ou_proc}, with $\LimBasis = 3,5,10$.  The number of failures
are zero for the larger amounts of data, in particular no estimations
are considered a failure for $\LimBasis=3$ and $j\geq 1$.  The RMSE
for $\Param_1$, and $\Param_2$ decreases with data of order
$\mathcal O(\LimDat^{-1/2})$, and the RMSE for $\Param_3$ decreases
approximately as $\mathcal O(\LimDat^{-1/3})$.

\begin{figure}[htb]
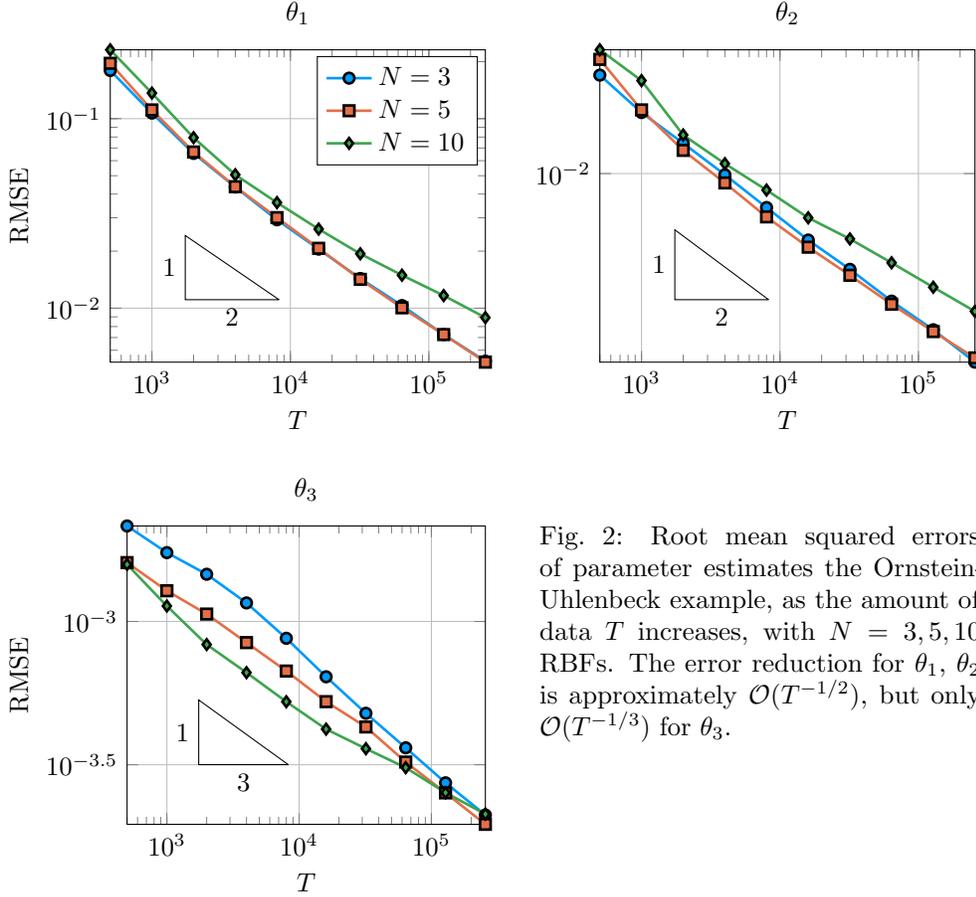

  \centering
  % \begin{subfigure}[b]{0.5\textwidth}
  \begin{minipage}[b]{0.5\textwidth}
    \vspace{0pt}
    \includegraphics[width=\textwidth,axisratio=1.2]{ou_rmse_convergence_theta1}
    % \end{subfigure}%
  \end{minipage}%
  % \begin{subfigure}[b]{0.5\textwidth}
  \begin{minipage}[b]{0.5\textwidth}
    \vspace{0pt}
    \includegraphics[width=\textwidth,axisratio=1.2]{ou_rmse_convergence_theta2}
    % \end{subfigure}%
  \end{minipage}
  % \begin{subfigure}[b]{0.5\textwidth}
  \begin{minipage}[t]{0.5\textwidth}
    \vspace{0pt}
    \includegraphics[width=\textwidth,axisratio=1.2]{ou_rmse_convergence_theta3}
    % \end{subfigure}%
  \end{minipage}
  \hfill
  \begin{minipage}[t]{0.45\textwidth}
    \vspace{1em}
    \caption{Root mean squared errors of parameter estimates the
      Ornstein-Uhlenbeck example, as the amount of data $\LimDat$
      increases, with $\LimBasis=3,5,10$ RBFs.  The error reduction
      for $\Param_1$, $\Param_2$ is approximately
      $\mathcal{O}(\LimDat^{-1/2})$, but only
      $\mathcal{O}(\LimDat^{-1/3})$ for $\Param_3$.
    }\label{fig:ou_rmse_convergence}
  \end{minipage}
\end{figure}

%%%%%%%%%%%%%%%%%%%%%%%%%%%%%%%%%%%%%
\subsection{Bounded Mean Reversion Process}\label{sec_num_example_1}
%%%%%%%%%%%%%%%%%%%%%%%%%%%%%%%%%%%%%

In \Cref{sec:ou_proc}, we saw that our algorithm performs comparably
well to other methods from the literature. Within the study of EDMD, a
common theme is the calculation of eigenfunctions (related to left
eigenvectors of the EDMD matrix) to examine the types of behaviour of
the system. For systems with many timescales, or those that are
confined to a low dimensional manifold, the eigenfunctions can be used
to offer a low dimensional description of the system.

%\todo[inline]{Reviewer comment: What is the conclusion of this
%  example? How do you choose $N$ and $J$? Would the results have been
%  the same with another value of $\Param$?}

For a matrix $A$ in the minimisation problem in equation
\eqref{eq:min_koop_edmd} with ordered eigenvalues
$1= \lambda_1 > \vert \lambda_2 \vert > \dots > \vert
\lambda_\LimBasis \vert$, and left and right eigenvectors (denoted
$\vect{w}$ and $\vect{v}$), then we can write the
$\LimEig$-eigendecomposition of $A$ as
\begin{equation}
  A_\LimEig = \sum_{j = 1}^\LimEig \lambda_j \vect{v}_j \Transpose{\vect{w}_j} \, ,
\end{equation}
and $A_\LimEig = A$ when $\LimEig = \LimBasis$. We then consider
replacing $A$ by $A_\LimEig$ in equation \eqref{eq:min_koop_edmd} for
varying $\LimEig$ and $\LimBasis$. We can interpret $\LimBasis$ as a
parameter that controls the possible resolution of the data, and
$\LimEig$ the parameter that specifies the maximum allowed resolution
in the generator.

To avoid numerical artefacts regarding sampling of data, we have
devised a numerical experiment where: we use a large amount of data
points; and we vary the types of basis functions used by considering
global basis functions, and deterministically placed RBFs (rather than
depending on the range of a sample path as in \Cref{sec:ou_proc}). We
consider the SDE with parameters $\vParam = (\Param_1,\Param_2)$
\begin{align}\label{eq_SDE_example}
  \dd X_t  = -2 \Param_1 X_t \dd t + \sqrt{2 \Param_2 (1-X_t^2)} \dd W_t \, ,
\end{align}
which is a mean reversion process (to $x=0$) bounded on the interval
$(-1,1)$. The SDE given by equation \eqref{eq_SDE_example} has
infinitesimal generator
\begin{align}
  \DiffGen(\vParam) = \Param_2 (1-x^2) \frac{\dd^2}{\dd x^2}  - 2 \Param_1 x \frac{\dd}{\dd x} \, .
\end{align}
Similar to in \Cref{sec:ou_proc}, the matrix $\Lmat{\MeaPr}(\vParam)$
can be pre-calculated. We then consider the following basis functions
\begin{itemize}
\item[1.]  Chebychev polynomials, defined on $[-1,1]$ as
  \begin{align}
    T_0(x) &= 1 \, ,  \nonumber  \\
    T_1(x ) &= x \, , \nonumber \\
    T_{n+1}(x) &= 2x T_n (x) - T_{n-1}(x) \, ,
  \end{align}
  and then $\Basis_j = T_{j-1}$ for $j=1\dots \LimBasis$.
\item[2.]  Gaussian RBFs as given by equation \eqref{eq_Gauss_RBF}. We
  position each basis equally along the interval $[-1,1]$ so
  $c_j = -1 + 2(j-1)/(\LimBasis -1)$ and choose the scaling
  constant to be the distance between points, $l = 2/(\LimBasis -1)$.
\item[3.]  Legendre polynomials, defined on $[-1,1]$ as
  \begin{align}
    P_0(x) &= 1 \, , \nonumber  \\
    P_1(x) &= x \, , \nonumber  \\
    (n+1) P_{n+1}(x) &= (2n+1)x P_n (x) - n P_{n-1}(x) \, ,
  \end{align}
  and then $\Basis_j = P_{j-1}$ for $j=1\dots \LimBasis$.
\end{itemize}
This choice was made as Chebychev polynomials are a popular choice for
polynomial basis functions on bounded intervals; RBFs offer
customisable ways of spanning a domain (with a multitude of methods
choosing the centre locations); and Legendre polynomials are the
eigenfunctions of the infinitesimal generator when $\vParam^* = (1,1)$.

In \Cref{fig_Simulation_Results} we show a comparison between the
different basis functions to estimate $\vParam^* = (1, 1)$
for different numbers of basis functions $\LimBasis$ and different
numbers of eigenfunctions in the eigen-expansion
$\LimEig \leq \LimBasis$. The simulation was set up with a very large
number of data points, we sample for 1000 time units using the
Milstein method with a time step of $\De t = 2^{-12}$. For the
Chebychev and Legendre polynomials
$2\leq \LimEig \leq \LimBasis \leq 46$, and for the RBFs
$4\leq \LimEig \leq \LimBasis \leq 92$.

From \Cref{fig_Simulation_Results}, when $\LimBasis = \LimEig$ we find
that Chebychev polynomials estimate the parameters well for all
$\LimBasis$ --- we note that in \Cref{fig_Simulation_Results}, it is
not always possible to see this effect for small $\LimBasis$. For
larger values of $\LimBasis$ we notice two trends: first, as we add
extra basis functions the estimates do not change; second, it is not
always necessary to have $\LimEig = \LimBasis$ and one can
occasionally obtain a better estimate with $\LimEig < \LimBasis$. One
interpretation of this is that with small $\LimBasis$, every
eigenfunction is important; however there is error in the data set
(generated by the Milstein SDE numerical scheme), and this error may manifest
itself in the higher order modes, so it can be beneficial to exclude
them. Finding the exact point at which to truncate is not immediately
obvious.
%\todo[inline]{Reviewer comment: What do you mean by ``there is an
%  error in the data set''? The trajectories are by nature random,
%  right?
%
%  I guess you should rewrite this to emphasise that the error is in the
%  numerical SDE solver scheme?
%}

We now consider the radial basis function results in
\Cref{fig_Simulation_Results}. One thing to note about RBFs is that
the locations and scaling parameters have change as
$\LimBasis$ varies. Therefore, one has to be careful when comparing
the system with $\LimBasis$ basis functions to the system with
$\LimBasis+1$ basis functions. This manifests itself in
\Cref{fig_Simulation_Results} as a non-monotonic behaviour for large
$\LimBasis, \LimEig$. The general trend however is that increasing the
number of basis functions improves the accuracy of the estimate, and
one should use the full eigen-expansion with $\LimEig = \LimBasis$.

Finally, the Legendre polynomial plots in
\Cref{fig_Simulation_Results} appear similarly to the Chebychev
polynomial plots. We also get the highest accuracy of parameter
estimation, however we are using \emph{a priori} information in that
we know the correct eigenfunctions.

%%%%%%%%%%%%%%%%%%%%%%%%%%%%%%%%%%%%
% \begin{wrapfigure}{R}{0.41\textwidth}
\begin{figure}[h!]
  \begin{center}
  \vspace{3em}
    \begin{overpic}[width=0.97\textwidth]{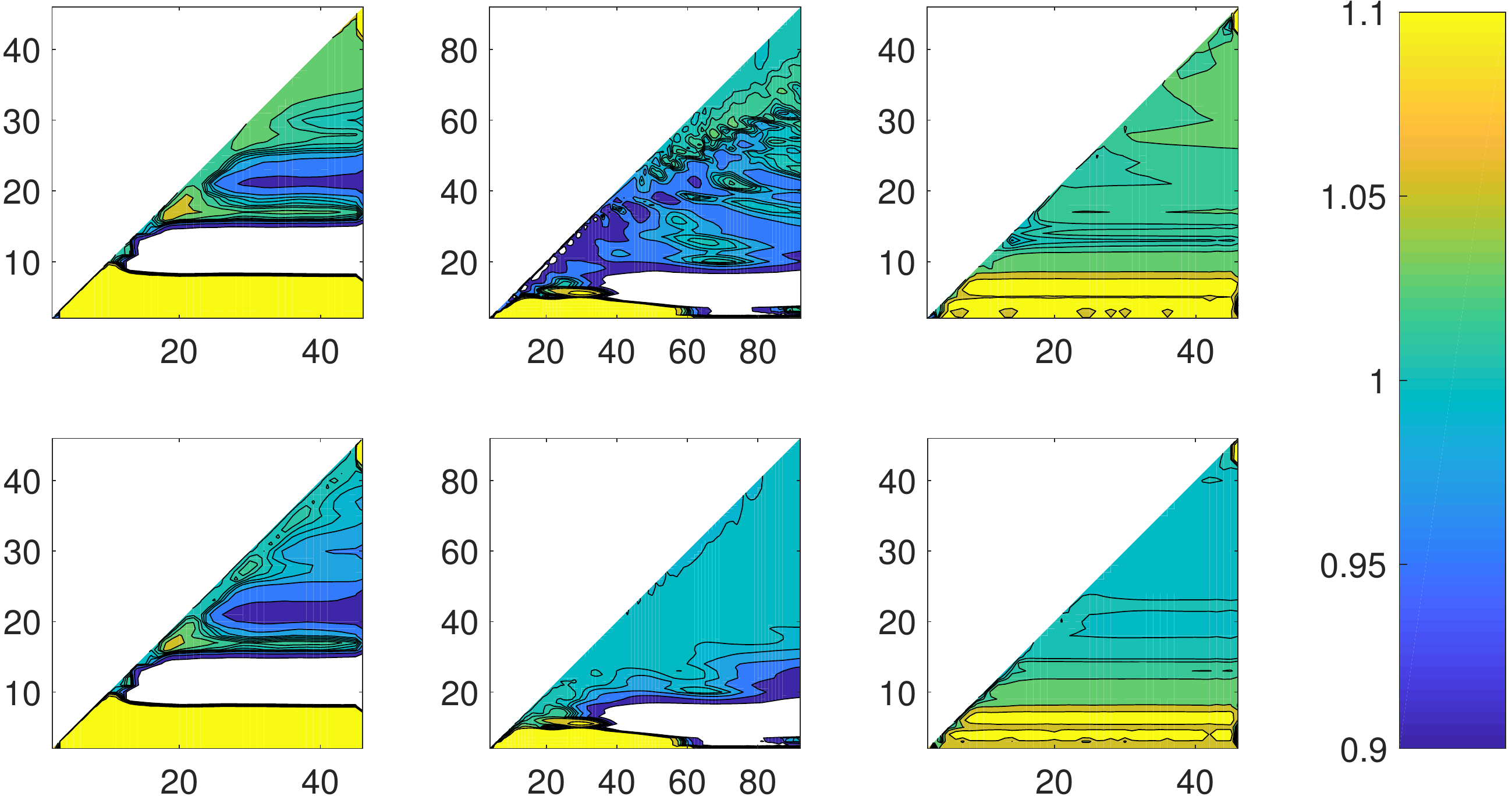}
     \put(2,55){\scriptsize Chebychev polynomials}
     \put(32,55){\scriptsize Radial basis functions}
     \put(61,55){\scriptsize Legendre polynomials}

      \put(-3,41){$\LimEig$} \put(-3,12){$\LimEig$}
      \put(26,41){$\LimEig$} \put(26,12){$\LimEig$}
      \put(55,41){$\LimEig$} \put(55,12){$\LimEig$}
      \put(13,26.5){$\LimBasis$} \put(13,-2){$\LimBasis$}
      \put(42,26.5){$\LimBasis$} \put(42,-2){$\LimBasis$}
      \put(71,26.5){$\LimBasis$} \put(71,-2){$\LimBasis$}
    \end{overpic}
  \end{center}
  \caption{ \footnotesize{ Estimate plots for parameters $\Param_1=1$
      (top row) and $\Param_2=1$ (bottom row) when varying $\LimBasis$
      and $\LimEig$. The underlying data set is generated from a
      single sample path of equation \eqref{eq_SDE_example} simulated
      using the Milstein method for 1000 time units with
      $\De t = 2^{-12}$. Estimates in the range $(0.9,1.1)$ are
      plotted in colour, estimates outside this range (including when
      $\LimEig > \LimBasis$) are in white. The contour lines are at
      $1\pm 2^{-n}/10$ for $n=0,1,2,3,4$.}
  }\label{fig_Simulation_Results}
  % \end{wrapfigure}
\end{figure}
%%%%%%%%%%%%%%%%%%%%%%%%%%%%%%%%%%%%

%%%%%%%%%%%%%%%%%%%%%%%%%%%%%%%%%%%%%
\section{Variations of the algorithm}\label{sec:variations_algo}
%%%%%%%%%%%%%%%%%%%%%%%%%%%%%%%%%%%%%
In this section we discuss variations of the algorithm with different
choices of objective. In particular, using the operator norm,
constrained EDMD, and generalised method of
moments~\cite{hansen1982large}.  In numerical tests, not all included
in this article, we find that all the methods have a (sometimes
significantly) larger function evaluation cost, without major
improvements to the parameter estimates.

In addition to the variations covered in this section, note that
operator matching of the Perron--Frobenius operator under the operator
or Frobenius norms can be done in similar ways. It follows by
considering the Perron--Frobenius matrix
$\Transpose{\TransmatDat} \Massmat{\MeaPrDat}^{-1}$
from~\eqref{eq:PFO_mat_rep}, and the similarly defined data-driven
matrix.

\subsection{Operator norm}
In \Cref{subsec:param_projkoop} and for our proposed algorithm, we try
to match the projected Koopman operator $\ProjKoopdDat^t(\Param)$ to
the EDMD operator $\EDMDop$ by minimising the distance between their
matrix representations under the Frobenius norm.  Another natural
choice would be to match the two operators under the operator norm
induced by the inner product on $\FunSpGFin$.  For a linear operator
$\mathcal{A}:\FunSpGFin\to\FunSpGFin$, the operator norm with respect
to the $L^2(\MeaPrDat)$ inner product is given by
\begin{equation}
  \norm{\mathcal{A}}_{\MeaPrDat} = \sup_{g\neq
    0}
  \sqrt{\frac{\int_\MeaSp{[\mathcal{A}g(x)]}^2\,\dd\MeaPrDat(x)}{\int_\MeaSp{[g(x)]}^2\,\dd\MeaPrDat(x)}}.
\end{equation}
Likewise, for a matrix $A\in\mathbb{R}^{\LimBasis\times \LimBasis}$,
the matrix norm with respect to the $\ell_2$ inner product on
$\mathbb{R}^\LimBasis$, weighted by a positive definite matrix
$M\in\mathbb{R}^{\LimBasis\times \LimBasis}$, is given by
\begin{equation}
  \norm{A}_M = \sup_{c\neq
    0}\sqrt{\frac{\Transpose{c}\Transpose{A}MAc}{\Transpose{c}Mc}}.
\end{equation}

As $\FunSpGFin$ is finite dimensional, $\mathcal{A}$ has a matrix
representation $A$ in the basis $\BBasis$, such that
$\mathcal{A}g=\Transpose{c_g}A\BBasis$.  Thus, the operator norm
reduces to
\begin{equation}\label{eq:op_norm_sym}
  \norm{\mathcal{A}}_{\MeaPrDat}
  = \sup_{c_g\neq
    0}\sqrt{\frac{\Transpose{c_g}A\left(
        \int_\MeaPr\BBasis\Transpose{\BBasis}\,\dd\MeaPrDat
      \right)\Transpose{A}c_g}{\Transpose{c_g}\left( \int_\MeaPr\BBasis\Transpose{\BBasis}\,
        \dd\MeaPrDat \right)c_g}}
  = \norm{\Transpose{A}}_{\MassmatDat} =  \norm{A}_{\MassmatDat},
\end{equation}
where the norm of $A$ in equation \eqref{eq:op_norm_sym} is equal to
its transpose as the mass matrix is symmetric. For the projected
Koopman and EDMD operators, we thus have
\begin{equation}
  \norm{\ProjKoopd{\MeaPrDat}^t(\Param)-\EDMDop}_{\MeaPrDat}=\norm{ {\KoopdmatDat(\Param)}-{\EDMDmat} }_{\MassmatDat}.
\end{equation}
Thus, a potentially more natural approach than minimising the
Frobenius norm in~\eqref{eq:min_koop_edmd} and \Cref{alg:edmdparam}
could be find the solution to the minimisation problem
\begin{equation}\label{eq:min_koop_edmd_op}
  \min_{\Param\in\ParamSet}\norm{ {\KoopdmatDat(\Param)} -
    {\EDMDmat} }_{\MassmatDat}.
\end{equation}
Objective evaluations of~\eqref{eq:min_koop_edmd_op} are more
expensive than the Frobenius norm, however it does not yield any
better parameter estimates: We have compared the two methods for the
numerical experiments in this article, and neither method particularly
dominates the other.

\subsection{Constrained EDMD}
Instead of calculating the EDMD operator and matching the Koopman
operator against that, we could also try to do parameter estimation by
constraining the EDMD matrix minimisation in~\eqref{eq:edmd_mat_min}
so that the matrix $A$ is of the form
$\KoopdmatDat=\exp\left( {t\Lmat{\MeaPrDat}\Massmat{\MeaPrDat}^{-1}}
\right)$.  This yields the optimisation problem
\begin{equation}\label{eq:constrained_edmd}
  \min_{\Param\in\ParamSet}
  \norm{\KoopdmatDat(\Param)\BBasis(\Xv)-\BBasis(\Yv)}_F^2.
  =
  \min_{\Param\in\ParamSet}\sum_{j=1}^\LimDat\norm{
    \KoopdmatDat(\Param)\BBasis(x_j)-\BBasis(y_j)}_2^2.
\end{equation}
The number of floating point operations required to evaluate this norm
grows linearly with the amount of data, and hence it becomes expensive
to perform parameter estimation with large amounts of data.  To
compare, in our proposed algorithm, the evaluation of the norm is
independent of data size.

To be sure, constrained EDMD parameter estimation with the
objective~\eqref{eq:constrained_edmd}, can provide good results.  In
\Cref{tbl:ou_comparison_alg_cedmd}, we provide convergence statistics
that compares \Cref{alg:edmdparam} with constrained EDMD.  The table
reports the root mean squared error with three RBFs, from the
\num{2000} Ornstein-Uhlenbeck sample paths from
\Cref{subsec:ouproc_example_conv}.  The parameter estimates are
slightly better for constrained EDMD, and in particular it improves
the convergence rate for $\Param_3$: The error decreases at approximately
$\mathcal O(\LimDat^{-1/2})$, compared to approximately
$\mathcal O(\LimDat^{-1/3})$ with \Cref{alg:edmdparam}.
\begin{table}[htb]
  \centering
  \begin{tabular}{l*{3}{ S[table-auto-round,table-format=-1.3]
    S[table-auto-round,table-format=-1.3]}}
    $j$&\multicolumn{2}{c}{$\Param_1$}&\multicolumn{2}{c}{$\Param_2$}&\multicolumn{2}{c}{$\Param_3$}\\
    \cmidrule(lr){2-3}\cmidrule(lr){4-5}\cmidrule(lr){6-7}
       &{Alg~\ref{alg:edmdparam}}&{C-EDMD}&{Alg~\ref{alg:edmdparam}}&{C-EDMD}&{Alg~\ref{alg:edmdparam}}&{C-EDMD}\\
    \midrule
    0&01.795252400451206&01.6540922896169966&003.042114709338371&003.31497167701178&0002.1537055996427856&0001.7846620149607442\\
    1&01.0701055940286909&00.9833207847963372&001.9763231985747937&001.766966150035098&0001.7386586877479785&0001.281753261861992\\
    2&00.6557251890470801&00.6101505659611432&001.4026972090636629&001.2163813922847412&0001.4611828906183554&0000.9459220081085587\\
    3&00.4356973761683581&00.41702588950718746&000.9873772257111248&000.8399131006612874&0001.160673292172601&0000.6617359184724565\\
    4&00.2926294621123515&00.28175001882171175&000.6846165677308837&000.5857405425450579&0000.8735489571369989&0000.45927381688653287\\
    5&00.204918854296469&00.19640420126071555&000.4762921592012875&000.4067057315495868&0000.6411365285770113&0000.3230315322906311\\
    6&00.14338036368806838&00.13656536839571822&000.34352880521824236&000.2927006825174069&0000.4781571197691795&0000.23296587893921876\\
    7&00.10322745547191186&00.09662434083621661&000.24093350255647923&000.20888289120285226&0000.36238080089434106&0000.1749822995512159\\
    8&00.05254012934856377&00.04617196578972231&000.12194588399186104&000.1021113186577314&0000.21119389457695207&0000.08881784882560835\\
    \midrule
       &{$\times 10^{-1}$}&{$\times 10^{-1}$}&{$\times 10^{-2}$}&{$\times 10^{-2}$}&{$\times 10^{-3}$}&${\times 10^{-3}}$\\
    \bottomrule
  \end{tabular}
  \caption{Root mean squared error comparing the proposed
    \Cref{alg:edmdparam} to constrained
    EDMD~\eqref{eq:constrained_edmd} for $\LimDat=500\times 2^j$ data
    points.  The results are reported from \num{2000} sample paths of
    the OU process in \Cref{subsec:ouproc_example_conv}, with three
    RBFs as explained in the OU process example.
  }\label{tbl:ou_comparison_alg_cedmd}
  %\todo[inline]{For thesis: include convergence values for MLE as well}
\end{table}

The improvements come at a cost, however. First, the objective
in~\eqref{eq:constrained_edmd} results in ill-conditioning for the
backtracking line search with BFGS and the optimiser diverges. To
prevent this, we had to employ the more costly line search by Hager
and Zhang~\cite{hager2006algorithm}, which requires more gradient
evaluations.  Second, evaluating the objective and gradients are more
expensive, especially for larger amounts of data. Users of the
algorithms should choose between these objectives based on their
computational budget and amount of data.

\subsection{Generalised method of moments}
The method of moments approach to parameter estimation is based on the
knowledge of relationships between parameters of a random variable and
its moments. For example, the mean and variance parameters of a
Gaussian random variable can be matched to the empirical mean variance
from a collection of data. One can further extend this idea to match
the expected value and empirical mean of arbitrary functions defined
on the output space of a random variable~\cite{hansen1982large}.  We
take advantage of the Koopman operator to match expected values to
empirical means using the basis functions of $\FunSpGFin$. The
approach is similar to constrained EDMD, however the sum over data is
taken inside the chosen norm on $\mathbb{R}^{\LimBasis}$, as opposed
to summing over the norm in~\eqref{eq:constrained_edmd}.

Let $x_0$ be a random variable distributed according to some
underlying probability space $(\MeaSp,\SigAlg,\MeaPr)$.  For a fixed
$t>0$, define $Y=X_t^{x_0}$, a random variable induced by the product
measure of the Brownian motion and $\MeaPr$.  For $\gf\in\FunSpGInf$,
the expected value of $\gf(Y)$ is given by
\begin{equation}\label{eq:expected_gY}
  \mathbb{E}[g(Y)]=\int_\MeaSp \mathbb{E}_W\left[\gf(X_t^{x})\right]\,\dd\MeaPr(x)
  = \int_\MeaSp \Koopd^t(\Param)\gf(x)\,\dd\MeaPr(x).
\end{equation}
The method of moments aims to find $\Param\in\ParamSet$ to match the
expected value of $\gf(Y)$ for functions $\gf\in\FunSpGInf$ with the
sample mean $m_{\gf} = \frac{1}{\LimDat}\sum_{j=1}^\LimDat
\gf(y_j)$. For a vector field
$\vgf=(\gf_1,\dots,\gf_r)\in{\FunSpGInf}^r$, define the vector sample
mean $\boldsymbol{m}_{\vgf} = \vecbracks{m_{\gf_1},\dots,m_{\gf_r}}$.
The generalised method of moments is then defined as finding a
solution to
\begin{equation}\label{eq:gen_mm_min}
  \Param^*=\argmin_{\Param\in\ParamSet}
  \norm{\mathbb{E}[\vgf(Y)]-\boldsymbol{m}_{\vgf}},
\end{equation}
for a choice of norm on $\mathbb{R}^r$.

Now, choose $\MeaPr=\MeaPrDat$, and let
$\vgf = \BBasis\in \FunSpGFin^\LimBasis$.  Then,
$\Koopd^t(\Param)\vgf(x) = \KoopdmatDat(\Param)\BBasis(x)$.  We see
from~\eqref{eq:expected_gY} that
\begin{equation}
  \mathbb{E}[\vgf(Y)]= \int_{\MeaPrDat}
  \KoopdmatDat(\Param)\BBasis(x)\,\dd\MeaPrDat(x)
  = \frac{1}{\LimDat}\sum_{j=1}^\LimDat \KoopdmatDat(\Param)\BBasis(x_j).
\end{equation}
If we choose the $\ell_2$ norm on $\mathbb{R}^\LimBasis$, then the
method of moments minimisation~\eqref{eq:gen_mm_min} becomes
\begin{align}\label{eq:gmm_ell2}
  \Param^*&=\argmin_{\Param\in\ParamSet}
            \norm{\frac{1}{\LimDat}\sum_{j=1}^\LimDat
            \KoopdmatDat(\Param)\BBasis(x_j)-\BBasis(y_j)}_2.
\end{align}
Contrast~\eqref{eq:gmm_ell2} to the constrained EDMD
problem~\eqref{eq:constrained_edmd}: The sum over data is taken inside
the norm.

In numerical tests, generalised method of moments with the
$\ell_2$-norm gives a very poor estimation performance, and function
evaluations become expensive as the amount of data increases.  The
first point can potentially be fixed, by changing the inner product on
$\mathbb{R}^\LimBasis$ to be an $\Sigma^{-1}$-weighted $\ell_2$ inner
product.  The most effective choice of the inverse weighting matrix
$\Sigma$ is, according to~\cite{hansen1982large,hurn2007seeing}, given
by
\begin{equation}
  \Sigma_{i,j} = \frac{1}{\LimDat}\sum_{k=1}^\LimDat
  \left( [
    \KoopdmatDat(\Param^*)\Basis_i(x_k)-\Basis_i(y_k)][\KoopdmatDat(\Param^*)\Basis_j(x_k)-\Basis_j(y_k)]\right).
\end{equation}
As we do not know $\Param^*$ in advance, this becomes an iterative
procedure in estimating $\Sigma$ and performing the optimisation.

%%%%%%%%%%%%%%%%%%%%%%%%%%%%%%%%%%%%%
\section{Discussion and Conclusion}\label{sec:conclusion}
%%%%%%%%%%%%%%%%%%%%%%%%%%%%%%%%%%%%%

In this paper, we presented a method to parameterise SDEs based off
approximating the generator. We provided bounds on the mean square
error of the EDMD matrix as an estimator in \Cref{sec:sdeintro}, numerical examples in
\Cref{sec:num_examples}, and suggested variations to the method in
\Cref{sec:variations_algo}. Thus far, our work has been limited to
SDEs, although other models are also of interest. We envision our
method being a suitable starting to point to parameterising a wide
range of stochastic dynamical systems when the generator of the
process is known. The methodology could also be applied to
deterministic systems, although more established methodologies already
exist (e.g., minimising mean squared error).

Our algorithm appears to not fit into the broad MLE or MM categories
for parameter estimation. Therefore, our work opens up new research
directions which we now briefly discuss.

One of the advantages to our method is that once the data matrices
$\BBasis(\Xv)$ and $\BBasis(\Yv)$ are constructed, the parameter
search does not depend on the number of data points $\LimDat$, only
the number of basis functions $\LimBasis$. In the limit of large data
$\LimDat \gg \LimBasis$, the data matrices will be computationally
intensive to construct, so we hope to sub-sample the data and compute
these efficiently. Additionally, there are alternatives to Monte Carlo
sampling of the generator matrix $\Lmat{\MeaPr}$. For example, one
could use kernel density estimation to find $\MeaPr$, and use
numerical integration to calculate the matrix entries.

Numerical experiments show that the method performs as well as a wide
range of existing methods. In \Cref{subsec:ouproc_example_conv} we
find that the convergence rate of the parameter estimation errors
decrease by orders $\mathcal{O}(\LimDat^{-1/2})$ and
$\mathcal{O}(\LimDat^{-1/3})$, perhaps indicating that accelerating
ideas from Monte Carlo approximations can improve convergence.  In the
numerical example in \Cref{sec_num_example_1}, we investigated
prediction accuracy whilst varying numbers of basis functions and
numbers of eigenfunctions in the approximation. Occasionally it was
the case that a limited eigen-expansions of the Koopman operator was
preferable to the full matrix. It is not clear when a limited
eigen-expansion is preferable to the full matrix.

When considering models with many parameters, there are many issues
around the topics of model selection, confidence in parameters, and
sensitivity analysis \cite{hamby1994review, kadane2004methods,
  zhang2015selection}. This is especially the case in our work when
many SDEs correspond to the same infinitesimal generator\footnote{This
  non-uniqueness arises from SDEs with multiple noise terms.}
\cite{schnoerr2014complex}. Theoretical advancements relating to rates
of convergence, especially with regards to error analysis, are now of
critical importance to promote the use of our method. We also see the
need to test our method on high dimensional stochastic dynamical
systems, especially ones in which diverse ranges of behaviour are
possible.

%%%%%%%%%%%%%%%%%%%%%%%%%%%%%%%%%%%%%
\section*{Contributions}
%%%%%%%%%%%%%%%%%%%%%%%%%%%%%%%%%%%%%
J.P.T-K and A.N.R contributed equally to this article. J.P.T-K had the
idea of EDMD-based parameter estimation, and performed the numerical
experiments with lower order eigenfunction expansions.  A.N.R devised
the proposed algorithm and the theoretical connection to Koopman
theory, performed the numerical experiments on the OU process, and
formalised the variations of the algorithm.

%%%%%%%%%%%%%%%%%%%%%%%%%%%%%%%%%%%%%
\section*{Acknowledgements}
%%%%%%%%%%%%%%%%%%%%%%%%%%%%%%%%%%%%%

A.N.R and J.P.T-K both received funding from the EPSRC under grant
reference numbers EP/L015803/1 (A.N.R), and EP/G037280/1 (J.P.T-K). We
thank Stefan Klus, P\'{e}ter Koltai, Scott Dawson, and in particular
Tomislav Plesa, for helpful discussions.

%%%%%%%%%%%%%%%%%%%%%%%%%%%%%%%%%%%%%
\bibliographystyle{siamplain} \bibliography{references}
%%%%%%%%%%%%%%%%%%%%%%%%%%%%%%%%%%%%%
\end{document}